\newtheorem{theorem}{Theorem}[section]
\newtheorem{proposition}[theorem]{Proposition}
\newtheorem{lemma}[theorem]{Lemma}
\newtheorem{corollary}[theorem]{Corollary}
\theoremstyle{definition}
\newtheorem{definition}[theorem]{Definition}
\theoremstyle{remark}
\newcounter{smalllist}
\DeclareMathOperator{\diam}{diam}
\numberwithin{equation}{section}
\newcommand{\abs}[1]{\left\lvert#1\right\rvert}
\newcommand{\lb}{\label}
\newcommand{\supp}{\text{\rm{supp}}}
\newcommand{\beq}{\begin{equation}}
\newcommand{\eeq}{\end{equation}}
\newcommand{\bal}{\begin{align}}
\newcommand{\eal}{\end{align}}
\newcommand{\bals}{\begin{align*}}
\newcommand{\eals}{\end{align*}}
\newcommand{\bbR}{{\mathbb{R}}}
\newcommand{\calL}{{\mathcal L}}
\newcommand{\eps}{\varepsilon}
\newcommand{\del}{\delta}
\newcommand{\tht}{\theta}
\newcommand{\ka}{\kappa}
\newcommand{\al}{\alpha}
\newcommand{\be}{\beta}
\newcommand{\ga}{\gamma}
\newcommand{\la}{\lambda}
\newcommand{\til}{\tilde}
\newcommand{\mbr}{\mathbb R}
\newcommand{\de}{\delta}
\newcommand{\ta}{\theta}
\newcommand{\si}{\sigma}
\newcommand{\cb}[1]{\left\{#1\right\}}
\newcommand{\rb}[1]{\left(#1\right)}
\newcommand{\enn}[2]{\begin{enumerate}\renewcommand{\theenumi}{#1\arabic{enumi}}
	\vspace{2mm}
	\itemsep0.2em
	\setlength\itemindent{13pt}
	#2
	\vspace{2mm}
\end{enumerate}}
\begin{document}
\title[Non-local KPP Transition Fronts]
{Transition Fronts for Inhomogeneous Fisher-KPP Reactions and Non-local Diffusion}

\author{ Tau Shean Lim and Andrej Zlato\v s}

\address{\noindent Department of Mathematics \\ University of
Wisconsin \\ Madison, WI 53706, USA 
}

\begin{abstract}
We prove existence of and construct transition fronts for a class of reaction-diffusion equations with spatially inhomogeneous Fisher-KPP type reactions and non-local diffusion. Our approach is based on finding these solutions as perturbations of appropriate solutions to the linearization of the PDE at zero.  Our work extends a method introduced by one of us to study such questions in the case of classical diffusion.
\end{abstract}

\maketitle

\section{Introduction and Main Results}
In this paper we study the existence of transition fronts for a class of reaction-diffusion equations with inhomogeneous  Kolmogorov-Petrovskii-Piskunov (KPP) type nonlinearities (also called Fisher-KPP \cite{Fisher, KPP}) and non-local diffusion. 
We consider the PDE
\begin{equation}
\lb{1.1} u_t=Hu+f(x,u),
\end{equation}
with the {\it non-local diffusion operator}
\[
(Hu)(x,t):= (J*u)(x,t)-u(x,t)=\int_{\mbr} J(y)[u(x-y,t)-u(x,t)]dy.
\]
The  kernel $J\in C^1(\mbr)$ 
satisfies on $\bbR$
\enn{J}{
\item \lb{J1} $J\ge 0$ is even and non-increasing on $\mbr^+$; 
\item \lb{J2} $\supp\, J=[-\delta,\delta]$ and $\int_{-\de}^{\de} J(y)dy=1$ (here $\de >0$ need not be small).
}
The  {\it inhomogeneous  KPP reaction function} $f\in C^2(\mbr\times[0,1])$ satisfies on $\mbr\times[0,1]$
\enn{F}{
\item \lb{F1} $f\ge 0$ and $f(x,0)=f(x,1)=0$;
\item \lb{F3} there is $\theta_1\in(0,1)$ such that $f_u(x,u)\le 0$ when $u\in[\theta_1,1]$;
\item \lb{F2} $a(x)g(u)\le f(x,u)\le a(x)u$, with $a(x):= f_u(x,0)$ and some $g$ as below.
}
The function $g\in C^1([0,1])$ in \eqref{F2} satisfies on $[0,1]$
\enn{G}{
\item \lb{G1} $g\ge 0$ and $g(0)=g(1)=0$;
\item \lb{G2}$g'(0)=1$, $g'$ is decreasing, $g'(1)\ge -1$, and $\int_0^1 u^{-2}(u-g(u))du<\infty$. 	
}
We denote $a_-:=\inf_{x\in\mbr} a(x)$ and $a_+:=\sup_{x\in\mbr} a(x)$, and also require 
\begin{equation} \lb{1.2z}
a_->0.
\end{equation}
We have $a_+<\infty$ by $f\in C^1$, and $f$ is of KPP type because $f(x,u)\le f_u(x,0)u$.

A {\it (right moving) transition front} for \eqref{1.1} is any solution $0\le u\le 1$ on $\mbr\times\mbr$ such that
\begin{equation}
\lb{1.3}\lim_{x\to-\infty} u(x,t)=1 \qquad\text{and}\qquad \lim_{x\to\infty} u(x,t)=0 
\end{equation}
for each $t\in\mbr$, and $u$ has a {\it bounded width}. The latter means that for each $\eps>0$,
\begin{equation}
\lb{1.4} \sup_{t\in\bbR} L_{u,\eps}(t):=\sup_{t\in\mbr} \diam\{x\in \mbr:\eps\le u(x,t)\le 1-\eps\} <\infty. 
\end{equation}
This notion of transition fronts is the 1-dimensional case of the definition by Berestycki-Hamel, which was stated for equations with classical diffusion (i.e., $\partial_{xx}$ in place of $H$) in \cite{BH3}.  It is a generalization of the notion of {\it traveling fronts} for homogeneous media and {\it pulsating fronts} for periodic media.  The former are solutions of \eqref{1.1} (or its classical diffusion counterpart) with $f(x,u)=f(u)$, which are of the form $u(x,t)=U(x-ct)$ for some speed $c\in\bbR$ and profile $U:\bbR\to(0,1)$ such that $\lim_{s\to-\infty} U(s)=1$ and $\lim_{s\to\infty} U(s)=0$.  The latter are solutions of \eqref{1.1} with $x$-periodic $f$, which are of the form $u(x,t)=U(x-ct,x)$, with $U$ periodic in and the above limits uniform in the second argument.

Traveling and pulsating fronts in the presence of classical diffusion have been extensively studied, starting with the works of Fisher \cite{Fisher} and Kolmogorov-Petrovskii-Piskunov \cite{KPP}.  Instead of surveying the vast literature, let us refer to the review articles by Berestycki \cite{Berrev} and Xin \cite{Xin2}, and mention specifically  that in the homogeneous/periodic KPP case, there exists a traveling/pulsating front precisely when the speed $c\ge c_f$, where the number $c_f>0$ is the {\it minimal front speed} for $f$ (in the homogeneous case $c_f=2\sqrt{f'(0)}$).  

The corresponding results for the non-local diffusion equation \eqref{1.1} are considerably more recent.  For instance, in \cite{BatesTravelBistable,CarrChmaj,CovDupaNonlocal,CovDupaSpeed,CovUniq}, existence, uniqueness, and other properties of traveling fronts are proved for various kernels $J$ and  various types of homogeneous reactions $f$ (KPP, monostable, ignition, and bistable). 
The case of periodic KPP reactions was also addressed by Coville, D\'avila, and Mart\'inez in \cite{CovPulsate}, where pulsating fronts were proved to exist precisely when the speed $c\ge c_{J,f}$ (for homogeneous reactions this was proved in \cite{CovDupaNonlocal}). In fact, \cite{CovPulsate} applies in several spatial dimensions, where it proves that for each unit vector $e$ there again exists a pulsating front in direction $e$ with speed $c$ precisely when $c\ge c_{J,f,e}$.  We mention that traveling fronts for equations with non-local diffusion represented by the fractional Laplacian and homogeneous ignition reactions \cite{MRS2}, as well as with classical diffusion and non-local homogeneous KPP reactions \cite{BNPR} were also studied recently.

In these studies, both for classical and non-local diffusion, it has been of crucial help that the traveling front ansatz $u(t,x)=U(x-ct)$ turns the PDE \eqref{1.1} into an ODE.  The pulsating front ansatz $u(t,x)=U(x-ct,x)$ ($U$ periodic in the second argument) similarly yields a degenerate elliptic PDE. 
 For general (non-periodic) inhomogeneous reactions, on the other hand, no such simplification is available.  Because of this, the question of existence and properties of transition fronts for \eqref{1.1} with  classical diffusion and general  inhomogeneous reactions 
 has  been addressed only recently in, among other works, \cite{MRS, MNRR, NRRZ, NolRyz, TZZ, VakVol, ZlaInhomog, ZlaGenfronts}.  
 The present paper is, to the best of our knowledge, the first study of the analogous non-local diffusion problem.

Our main result is existence of transition fronts for \eqref{1.1} with KPP reactions whose $a(x)=f_u(x,0)$ is sufficiently close to a constant (while $f$ itself need not be close to a homogeneous reaction).  We prove this by extending to this model a method introduced by one of us in \cite{ZlaInhomog}
for the classical diffusion case.  
The idea here is to exploit the close relationship between \eqref{1.1} and its linearization at $u=0$, 
\begin{equation}  \label{2.1}
v_t = Hv +a(x)v.
\end{equation}
We will therefore first study the simpler case of {\it front-like solutions} of \eqref{2.1}, of the form
\beq\lb{1.5}
v_{\lambda}(x,t) = e^{\lambda t}\phi_{\lambda}(x).
\eeq
Here $\phi_{\lambda}>0$ is a generalized eigenfunction of the operator $H+a(x)$, satisfying 
\begin{equation} \label{2.2}
H \phi_\lambda + a(x) \phi_{\lambda} = \lambda \phi_{\lambda}
\end{equation}
on $\bbR$, which  grows exponentially  to $\infty$ as $x\to-\infty$ and decays exponentially  to 0 as $x\to\infty$.  

In the case of classical diffusion, Sturm-Liouville theory assures existence of (a unique up to a multiple) such $\phi_\lambda$ if and only if $\lambda>\sup\sigma(\partial_{xx}+a(x))$ (with $\sigma(\calL)$ the spectrum of  $\calL$).   We will prove that for \eqref{2.2}, such $\phi_\lambda$ exists for each $\lambda>a_+$.   Note that $H$ is a  negative operator on $L^2(\bbR)$, so $a(x)\le a_+$ shows $a_+\ge \sup\sigma(H+a(x))$. In fact, $-2I\le H\le 0$, with $I$ the identity operator,  since  $\|J*\phi\|_2\le \|J\|_1\|\phi\|_2=\|\phi\|_2$ by Young's inequality.  

Also note that if $a$ is constant, then $\sup\sigma(H+a)=a$ and  for each $\lambda>a$ there is $p_\la>0$ such that $\phi_\la(x)=e^{-p_{\la}x}$ solves \eqref{2.2}.  This $p_\lambda$ is unique and given by $\int_\bbR J(y)e^{p_\la y}dy=1+\la-a$.  In this case the solution \eqref{1.5} can also be written as $v_\la(x,t)=e^{-p_\la(x-c t)}$, with speed $c=\la p_\la^{-1}$.  In the general inhomogeneous case, however, fronts for \eqref{1.1} and \eqref{2.1} typically do not have specific speeds,  so one cannot anymore ``parametrize'' fronts via their speeds $c$.  Instead, one can  use the ``energies'' $\la$ for this purpose.

Next we note that by  \eqref{F2}, solutions of \eqref{2.1} are super-solutions of \eqref{1.1}.  The main result of \cite{ZlaInhomog} is showing that in the case of classical diffusion, for each $\lambda\in(\sup\sigma(\partial_{xx}+a(x)), 2a_-)$ there is a function $h_\lambda:[0,\infty)\to[0,1)$ such that  $w_\lambda:=h_\lambda(v_\lambda)\le v_\lambda$ is a sub-solution of \eqref{1.1}, and then finding a transition front  $u_\lambda$ for \eqref{1.1} between $w_\lambda$ and $\min\{v_\lambda,1\}$.  This $h_\lambda$ satisfies
\beq\lb{1.32}
h_\lambda(0)=0, \qquad h_\lambda'(0)=1,   \qquad \lim_{v\to\infty} h_\lambda(v)=1, \qquad  \text{and} \qquad h_\lambda''<0 \quad \text{on $(0,\infty)$},
\eeq
which also means that $h_\lambda$ is increasing and $h_\lambda(v)\le v$ on $[0,\infty)$.  From $\lim_{v\to 0} v^{-1}h_\lambda(v)=1$, $\lim_{x\to\infty}v_\la(x,t)=0$ for each $t\in\bbR$, and $w_\lambda\le u_\lambda\le v_\lambda$ it follows that
\beq\lb{1.41}
\lim_{x\to\infty} \frac{u_\lambda(x,t)}{v_\lambda(x,t)}=1
\eeq
for each $t\in\bbR$.  We note that the bound $\lambda<2a_-$ is not just a technical limitation; it is sharp for constant $a$, and there are also examples of KPP $f$ with $\sup\sigma(\partial_{xx}+a(x))> 2a_-$ for which no transition fronts exist at all \cite{NRRZ}.

In the present paper  we show that this approach can be  extended to the non-local diffusion equation \eqref{1.1}.  
To do so, we need to overcome three new difficulties.  First, we are not aware of a version of the Sturm-Liouville theory for operators $H+a(x)$, and have to prove the necessary result below (Lemma \ref{L.2.1}).  Second, due to the non-locality of $H$, we need to obtain very good estimates on the oscillation of the generalized eigenfunctions $\phi_\la$ (Lemma \ref{L.3.2}) in order to apply the (local in nature) method of finding sub-solutions from \cite{ZlaInhomog}.
And third, \eqref{1.1} lacks the regularizing effects of its classical diffusion counterpart.  In fact, the fundamental solution of $u_t=Hu$ is  
\[
\Gamma(x-x_0,t):=e^{-t} \delta_{0}(x-x_0)+e^{-t}(e^{t\hat J}-1)\check{}\, (x-x_0),
\]
 where $\delta_{0}$ is the delta function at $0$ (see  \cite[Lemma 1.6]{VailloNDP}). We overcome this lack of parabolic regularity theory for \eqref{1.1} by showing that while the regularity of solutions of the PDE does not improve with time, for at least some solutions it does not worsen arbitrarily either (Lemma \ref{L.4.1}). 
Our main result is as follows.


\begin{theorem} \lb{T.1.1}
Assume that $J,f,g$ satisfy the hypotheses (J), (F), (G) above and \eqref{1.2z}. 

(i)
If $\la>a_+$, then \eqref{2.2} has a continuous solution $\phi_\lambda>0$ with 
\begin{equation} \lb{1.6} 
\lim_{x\to-\infty} \phi_\la(x)=\infty \qquad\text{and}\qquad \lim_{x\to\infty} \phi_\la(x)=0. 
\end{equation}
(In fact, by Lemma \ref{L.2.1}, $\phi_\la$ grows and decays at least exponentially as $x\to -\infty$ and $x\to \infty$.)
Thus  $v_\lambda$ from \eqref{1.5} is a super-solution of \eqref{1.1}.

(ii)
 There are $\la_0=\la_0(J,a_-)>0$ (which is non-decreasing in $a_-$) and $h_g:[0,\infty)\mapsto[0,1)$ satisfying \eqref{1.32} such that if $a_+<a_-+\la_0$, then for each $\la\in (a_+,a_-+\la_0)$ the function $w_\la:= h_g(v_\la)$ is a sub-solution of \eqref{1.1}. 

(iii) If $\la\in (a_+,a_-+\la_0)$, then there exists a transition front $u_\la$ for \eqref{1.1} satisfying 
\begin{equation} \lb{1.7}
w_\la\le u_\la\le \min\cb{v_\la,1}.
\end{equation}
\end{theorem}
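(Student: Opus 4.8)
The plan is to assemble Lemma \ref{L.2.1}, Lemma \ref{L.3.2}, and Lemma \ref{L.4.1} into the three parts of the theorem. For (i), I would invoke Lemma \ref{L.2.1} to produce the positive eigenfunction $\phi_\la$ with the stated at-least-exponential growth and decay, which gives the limits \eqref{1.6}. That $v_\la=e^{\la t}\phi_\la$ is a super-solution is then immediate from \eqref{2.2}: since $Hv_\la=e^{\la t}H\phi_\la=(\la-a)v_\la$, we get $(v_\la)_t-Hv_\la-f(x,v_\la)=a(x)v_\la-f(x,v_\la)\ge0$ by the right-hand inequality in \eqref{F2}.

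For (ii), I would first construct $h_g$ from $g$ alone (hence the $\la$-independence), as the solution of an autonomous ODE whose solvability and normalization \eqref{1.32} — in particular $h_g'(0)=1$, concavity $h_g''<0$, and $h_g(\infty)=1$ — are guaranteed by the hypotheses on $g$, crucially the integrability in (G2). The core is the sub-solution inequality $(w_\la)_t-Hw_\la-f(x,w_\la)\le0$ for $w_\la=h_g(v_\la)$. Writing $Hw_\la=h_g'(v_\la)Hv_\la+E$ and using $Hv_\la=(\la-a)v_\la$ gives $(w_\la)_t-h_g'(v_\la)Hv_\la=a(x)h_g'(v_\la)v_\la$, so with the nonlocal defect
\[
E:=Hw_\la-h_g'(v_\la)\,Hv_\la=\int_{\mbr}J(y)\bigl[h_g(v_\la(x-y))-h_g(v_\la)-h_g'(v_\la)\bigl(v_\la(x-y)-v_\la\bigr)\bigr]\,dy\le0
\]
(each integrand is $\le0$ by $h_g''<0$) the inequality reduces, via $f(x,w_\la)\ge a(x)g(w_\la)$ from \eqref{F2}, to
\[
a(x)\bigl[h_g'(v_\la)\,v_\la-g(h_g(v_\la))\bigr]\le E.
\]
Designing $h_g$ so that $g(h_g(v))-v\,h_g'(v)\ge0$ with a quantitative margin, this becomes $|E|\le a_-\,[g(h_g(v_\la))-v_\la h_g'(v_\la)]$, and here Lemma \ref{L.3.2} enters decisively: its oscillation bound controls $(v_\la(x-y)-v_\la(x))^2$ on $\supp J$ by a small multiple of $v_\la(x)^2$, so $|E|$ is dominated by this margin exactly when the decay rate of $\phi_\la$ is small enough. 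This is what fixes $\la_0=\la_0(J,a_-)$ and the regime $\la<a_-+\la_0$; the monotonicity of $\la_0$ in $a_-$ reflects that a larger $a_-$ enlarges the margin.

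For (iii), I would use a monotone sandwiching argument. For each $n$, solve \eqref{1.1} on $[-n,\infty)$ with initial data $w_\la(\cdot,-n)$; since $w_\la$ is a sub-solution and $\min\{v_\la,1\}$ a super-solution (both $v_\la$ and the constant $1$ are super-solutions, the latter because $f(x,1)=0$, and the minimum of super-solutions is again a super-solution for $H$) with $w_\la\le\min\{v_\la,1\}$, comparison traps each solution in $[w_\la,\min\{v_\la,1\}]$ and makes the sequence non-decreasing in $n$. The pointwise limit $u_\la$ then satisfies \eqref{1.7}, and Lemma \ref{L.4.1} supplies the uniform regularity needed to pass to the limit and confirm $u_\la$ solves \eqref{1.1} on $\mbr\times\mbr$. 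The limits \eqref{1.3} follow from the sandwich: $u_\la\le v_\la\to0$ as $x\to\infty$, while $u_\la\ge h_g(v_\la)\to1$ as $x\to-\infty$ since $v_\la\to\infty$ and $h_g(\infty)=1$. For the bounded width \eqref{1.4}, the set $\{\eps\le u_\la\le1-\eps\}$ lies inside a fixed multiplicative band $\{c_1(\eps)\le v_\la\le c_2(\eps)\}$ for $\phi_\la$, whose diameter is uniformly bounded in $t$ by the at-least-exponential growth/decay of Lemma \ref{L.2.1} sharpened by the oscillation control of Lemma \ref{L.3.2}.

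I expect the sub-solution verification in (ii) to be the main obstacle. Unlike the classical diffusion case, concavity of $h_g$ gives the nonlocal defect $E$ the unhelpful sign, so the whole construction rests on the quantitative oscillation estimate of Lemma \ref{L.3.2} being strong enough to confine $E$ within the margin created by $a_->0$ and $\la<a_-+\la_0$.
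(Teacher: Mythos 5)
Your architecture coincides with the paper's own proof in all three parts: (i) is Lemma \ref{L.2.1} plus the right-hand inequality of (F2); (ii) designs $h_g$ from a traveling-front ODE (the paper takes $h_g=h_{g,3/4}$, built from Uchiyama's fronts and satisfying \eqref{3.3}) and controls the non-local Taylor defect using Lemma \ref{L.3.2}; (iii) solves Cauchy problems from times $-n$ with data $w_\lambda(\cdot,-n)$ and passes to a limit using Lemma \ref{L.4.1}. However, there is a genuine gap at the decisive step of (ii). Taylor's theorem writes your defect as
\[
E=\frac12\int_{-\delta}^{\delta}J(y)\,h_g''(\zeta_{x,y,t})\,[v_\lambda(x-y,t)-v_\lambda(x,t)]^2\,dy,
\]
where $\zeta_{x,y,t}$ lies between $v_\lambda(x-y,t)$ and $v_\lambda(x,t)$, while the margin supplied by the ODE design is, by \eqref{3.3}, $g(h_g(v))-v h_g'(v)=\alpha^2 v^2|h_g''(v)|$ evaluated at $v=v_\lambda(x,t)$. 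Lemma \ref{L.3.2} gives $[v_\lambda(x-y,t)-v_\lambda(x,t)]^2\le \gamma_{\lambda-a_-}^2 v_\lambda(x,t)^2$, but that alone only yields $|E|\le \tfrac12\,\gamma_{\lambda-a_-}^2\, v_\lambda(x,t)^2 \sup_{|y|\le\delta}|h_g''(\zeta_{x,y,t})|$. To reach your inequality $|E|\le a_-\,[g(h_g(v_\lambda))-v_\lambda h_g'(v_\lambda)]$ you must additionally compare $|h_g''(\zeta_{x,y,t})|$ with $|h_g''(v_\lambda(x,t))|$, i.e., show that $h_g''$ does not oscillate much between multiplicatively close arguments. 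This is not free, and it is exactly what the paper's Lemma \ref{L.3.1} provides (the bound $|\rho'|\le\beta\rho$ for $\rho(x)=-h_{g,\alpha}''(e^{-x})$, proved via the refined front estimate of Lemma \ref{L.5.1}); combined with Lemma \ref{L.3.2} it gives $-h_g''(\zeta_{x,y,t})\le (1+\gamma_{\lambda-a_-})^4(-h_g''(v_\lambda(x,t)))$ as in \eqref{3.7}, which is what closes the argument. Your sketch never addresses this ratio. Relatedly, your closing diagnosis misidentifies the new difficulty: the defect has the same ``unhelpful'' sign for classical diffusion (there it is $-h_g''(v)v_x^2\ge 0$); what is genuinely new in the non-local setting is that $h_g''$ is evaluated at points other than $v_\lambda(x,t)$, and that is precisely the role of Lemma \ref{L.3.1}.

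The rest is sound. In (iii), your monotone-in-$n$ sandwich (via $u_{n+1}(\cdot,-n)\ge w_\lambda(\cdot,-n)=u_n(\cdot,-n)$ and comparison) is a valid variant of the paper's compactness argument, and your observation that $\min\{v_\lambda,1\}$ is a super-solution of the non-local equation is correct; note, though, that pointwise monotone convergence alone does not let you pass to the limit in \eqref{1.1} --- the paper supplements Lemma \ref{L.4.1} with uniform bounds on $(u_n)_t$ and $(u_n)_{tt}$ to get locally uniform convergence of $u_n$ and $(u_n)_t$ --- and the bounded width \eqref{1.4} already follows from Lemma \ref{L.2.1} and \eqref{1.7}, with no sharpening by Lemma \ref{L.3.2} needed.
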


{\it Remarks.} 
1. Obviously \eqref{1.41} holds again.
\smallskip

2.  Here $h_g$ only depends on $g$ only, so not on $\la\in (a_+,a_-+\la_0)$.
\smallskip

3. In the case of classical diffusion \cite{ZlaInhomog} obtains $\la_0=a_-$, which is sharp.  An explicit expression for our $\lambda_0$ can be found from the formulas in Sections \ref{S3} and \ref{S6}, but we do not know what the sharp value is in this case.
\smallskip

4. 
At the end of Section \ref{S4} we obtain an explicit upper bound on the $\eps$-width of $u_\la$ (defined in \eqref{1.4}) for $\lambda\in(a_+,a_-+\lambda_0)$. This bound depends only on $\eps,J,g$ and on an upper bound for $a_-$ and $(\la-a_+)^{-1}$.
\smallskip

5. As can be easily seen from the proof, the theorem extends to time-dependent $f$ such that $f_u(t,x,0)$ is time independent and (F) holds for each $t\in\bbR$. 
\smallskip

6. If  $a_+<\inf_n \lambda_n\le \sup_n \lambda_n < a_-+\la_0$ and $b_n>0$ are such that $\sum_{n} b_n < \infty$, then  
as in \cite{ZlaInhomog}, the result holds with $\phi_\la$ and $v_\lambda$ replaced by $\sum_{n} b_n \phi_{\la_n}$ and $\sum_{n} b_n v_{\la_n}$. The corresponding fronts are a combination of a countable number of the ``pure" fronts from (iii). Their existence is new even in the cases of homogeneous and periodic reactions (and non-local diffusion). 
\smallskip



We prove the three parts of Theorem \ref{T.1.1} in the next three sections, postponing the proofs of two crucial estimates needed for the construction of sub-solutions until Sections 5 and 6. 
\smallskip

{\bf Acknowledgements.}  We thank the anonymous referee for questions that pushed us to improve our main result.  TSL was partially supported by NSF grant DMS-1056327, and AZ was partially supported by NSF grants DMS-1056327, DMS-1113017,  and DMS-1159133.


\section{Proof of Theorem \ref{T.1.1}(i) (Construction of a Super-solution)}
Recall that $v_\lambda$ from \eqref{1.5} is a super-solution of \eqref{1.1} when $\phi_\lambda$ solves \eqref{2.2}.  We thus only need to prove the following result.

%
%

\begin{lemma}
\lb{L.2.1} If $\lambda>a_+$, then there is a continuous solution $\phi_\lambda>0$ of \eqref{2.2} and $L=L(J,\la-a_-,\la-a_+)>0$ such that $\phi_\la(x)\ge 2\phi_\la(y)$ whenever $y\ge x+L$.
\end{lemma}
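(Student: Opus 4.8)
The plan is to rewrite \eqref{2.2} as the convolution identity $J*\phi=m\phi$ with $m(x):=1+\la-a(x)$, and to build $\phi_\la$ as a normalized limit of solutions of truncated Dirichlet problems, obtaining existence and the geometric decay estimate together. Note $m_+:=1+\la-a_+>1$ and $m_+\le m(x)\le M:=1+\la-a_-$. The engine is a maximum principle that exploits $\supp J=[-\de,\de]$: writing $L\phi:=J*\phi-m\phi$, if $Lw\le0$ on a bounded interval $(\al,\be)$ and $w\ge0$ on the two $\de$-collars $(\al-\de,\al]$ and $[\be,\be+\de)$, then $w\ge0$ on $(\al,\be)$. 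Indeed, a negative minimum of $w$ over $[\al-\de,\be+\de]$ would be attained at an interior point $x_*$, where $J*w(x_*)\ge w(x_*)$, while $Lw(x_*)\le0$ forces $J*w(x_*)\le m(x_*)w(x_*)<w(x_*)$ (as $m_+>1$), a contradiction. The point is that only the collars, not the whole complement, need be controlled. For barriers I set $\Phi(p):=\int_{-\de}^{\de}J(y)e^{py}\,dy$, which is even, strictly convex, with $\Phi(0)=1$ and $\Phi\to\infty$, so $p_+:=\Phi^{-1}(m_+)>0$ is well defined and $e^{-p_+x}$ is a global supersolution ($Le^{-p_+\cdot}=(m_+-m)e^{-p_+\cdot}\le0$) decaying at rate $p_+$.

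Second, I would establish a Harnack-type bound: there is $C_H=C_H(J,M)$ with $\phi(z)\le C_H\,\phi(z')$ whenever $|z-z'|\le\de$ and $\phi\ge0$ solves the equation near $[z,z']$. Since $J\in C^1$ with $J(0)>0$, one has $\|J\|_\infty<\infty$ and $J\ge c_0$ on some $[-r,r]$, whence $\phi(x)\le\|J\|_\infty m_+^{-1}\int_{x-\de}^{x+\de}\phi$ and $\int_{x-r}^{x+r}\phi\le Mc_0^{-1}\phi(x)$; chaining these two inequalities over $O(\de/r)$ overlapping intervals yields the two-sided comparison, and iterating gives uniform two-sided bounds for $\phi$ on any compact set.

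The construction and the decay estimate then proceed as follows. For each $n$ solve $L\phi_n=0$ on $(-n,n)$ with $\phi_n=1$ on $(-\infty,-n]$ and $\phi_n=0$ on $[n,\infty)$; since the convolution has operator norm $\le1<m_+$, the interior problem has a unique solution, which is positive because the exterior datum and the interior inverse are positive, and the collar maximum principle applied to the constant supersolution $1$ gives $\phi_n\le1$. Set $\tilde\phi_n:=\phi_n/\phi_n(0)$. \emph{Decay estimate (the crux):} fix $z_0$ and compare $\tilde\phi_n$ on $\Omega=(z_0,n)$ with $\psi(z):=C_H\tilde\phi_n(z_0)e^{-p_+(z-z_0)}$. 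On the left collar $(z_0-\de,z_0]$, Harnack gives $\tilde\phi_n\le C_H\tilde\phi_n(z_0)\le\psi$; on the right collar $[n,n+\de)$, $\tilde\phi_n=0\le\psi$; and $L(\psi-\tilde\phi_n)\le0$ on $\Omega$. The maximum principle yields $\tilde\phi_n(z)\le C_H\tilde\phi_n(z_0)e^{-p_+(z-z_0)}$ on $(z_0,n)$, uniformly in $n$, so with $L:=p_+^{-1}\ln(2C_H)$ we get $\tilde\phi_n(x)\ge2\tilde\phi_n(y)$ whenever $y\ge x+L$. As $p_+=p_+(J,\la-a_+)$ and $C_H=C_H(J,\la-a_-)$, this $L$ has exactly the claimed dependence. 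Harnack also bounds $\tilde\phi_n$ two-sidedly on compacts uniformly in $n$, hence (regularity passing through $J,m\in C^1$) the $\tilde\phi_n$ are locally equicontinuous; a locally uniform limit $\phi_\la$ satisfies $\phi_\la(0)=1$ and $J*\phi_\la=m\phi_\la$ on $\mbr$. If $\phi_\la(x_1)=0$ then $J*\phi_\la(x_1)=0$ forces $\phi_\la\equiv0$ near $x_1$, hence everywhere, contradicting $\phi_\la(0)=1$; so $\phi_\la>0$. Finally the inherited bound $\phi_\la(x)\ge2\phi_\la(y)$ for $y\ge x+L$ gives \eqref{1.6} and the at-least-exponential rates, by iterating to the right (decay to $0$) and to the left (growth to $\infty$).

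\textbf{Main obstacle.} The heart of the matter is the decay estimate. A naive supersolution comparison would require dominating $\phi_\la$ by $e^{-p_+x}$ on the \emph{entire} left half-line, which is impossible since $\phi_\la$ grows there; the finite range of $J$ rescues this by reducing domination to the $\de$-collars, and the right-hand datum $\phi_n=0$ supplies the one collar bound that cannot be obtained a priori. The supporting technical point is that the Harnack constant must depend only on $J$ and $M=1+\la-a_-$, so that $L$ depends only on $J,\la-a_-,\la-a_+$ as asserted; keeping this dependence clean, and handling the regularity needed for compactness (given that $H$ does not regularize), is the remaining work.
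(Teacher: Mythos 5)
Your overall architecture is sound and genuinely different from the paper's. The paper constructs $\phi_\lambda$ by applying the resolvent $(H+a-\lambda)^{-1}$ to compactly supported sources $\eta_j\le 0$ translated toward $-\infty$, normalizing at $0$, and extracting a locally uniform limit; both the compactness and the ratio bound come from a log-Lipschitz estimate on $J*\phi$ (Lemma \ref{L.2.2}), whose decay part is proved via the integral inequality $(J*\phi)(x)\ge m\int_x^\infty (J*\phi)(\tau)\,d\tau$ rather than by comparison with a barrier. Your route --- Dirichlet approximations on $(-n,n)$ with data $1$ on the left and $0$ on the right, solvability by a Neumann series (valid, since the relevant operator norm is $\le m_+^{-1}<1$), a collar maximum principle exploiting $\supp J=[-\delta,\delta]$, and the explicit barrier $e^{-p_+x}$ with $\Phi(p_+)=m_+$ --- is a legitimate alternative, and the collar maximum principle, the barrier computation, the comparison on $(z_0,n)$, and the resulting $L=p_+^{-1}\log(2C_H)$ with the claimed dependence are all correct. (One small point to patch: the Dirichlet solution is in general discontinuous across $x=n$, so the infimum of $\psi-\tilde\phi_n$ over $(z_0,n)$ may only be approached as $x\to n^-$ rather than attained; the contradiction still goes through by evaluating the equation along a minimizing sequence, using $m\ge m_+>1$.)

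The genuine gap is the Harnack-type bound, on which your argument leans twice (the left collar of the comparison, and the uniform two-sided bounds giving compactness). The two inequalities you propose to chain live at different scales: $\phi(x)\le \|J\|_\infty m_+^{-1}\int_{x-\delta}^{x+\delta}\phi$ controls a point value by an integral over an interval of length $2\delta$, while $\int_{x-r}^{x+r}\phi\le Mc_0^{-1}\phi(x)$ controls an integral over an interval of length only $2r<2\delta$ by a point value. Alternating them is circular: to bound $\phi(z)$ you must control an integral over a $2\delta$-interval; covering it by $2r$-intervals produces point values at $O(\delta/r)$ new centers, and bounding those requires integrals over yet larger sets, so each round enlarges the region to be controlled and the loop never closes. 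At best this scheme yields $\phi(x)\le C\max\{\phi(x-r),\phi(x+r)\}$, i.e.\ comparability of $\phi(x)$ with the value at \emph{some} neighbor, which is not a Harnack inequality. The statement is true, and the fix is to iterate the equation rather than the inequalities: from $J*\phi=m\phi\le M\phi$ one gets by induction $J^{*k}*\phi\le M^k\phi$ (convolve the inductive bound with $J\ge 0$ and use the equation once more), and since the convolution powers $J^{*k}$ admit positive lower bounds $c_k(J)$ on intervals $[-kr/2,kr/2]$ of linearly growing length, choosing $k\sim \delta/r$ upgrades your second inequality to $\int_{x-2\delta}^{x+2\delta}\phi\le M^kc_k^{-1}\phi(x)$; a single application of your first inequality then gives $\phi(z)\le C_H\phi(z')$ for $|z-z'|\le\delta$ with $C_H=C_H(J,M)$, exactly the dependence you need. (Alternatively, the paper's device works: $|J'|\le C_J\,(J*J)$ pointwise, hence $|(J*\phi)'|\le C_J M\,(J*\phi)$, which yields $\phi(y)\le Me^{C_JM|x-y|}\phi(x)$ directly.) With this step repaired, your proof is complete.
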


To prove this, we will need an appropriate regularity estimate.

\begin{lemma}
\lb{L.2.2}
Assume that $\lambda>a_+$ and $\phi>0$ is continuous on $\bbR$ and solves \eqref{2.2} on $[b,\infty)$.  
There are $C=C(J,\la-a_-)>0$ and $m=m(J,\la-a_-,\la-a_+)>0$ such that the following hold.

(i)   If $x\ge b+\delta$, then
\begin{equation}
\lb{new-2.3}|(J*\phi )'(x)|\le C(J*\phi)(x).
\end{equation}
In particular, for all $x,y\in[b+\delta,\infty)$,
\begin{equation}
\lb{new-2.4}(J*\phi)(y)\le e^{C|x-y|}(J*\phi)(x)
\end{equation}

(ii) If $\lim_{x\to\infty} \phi(x)= 0$ and $y\ge x\ge b+\delta$, then 
\begin{equation}
\lb{new-2.5}(J*\phi)(y)\le \frac Cm e^{-m(y-x)} (J*\phi)(x).
\end{equation}
\end{lemma}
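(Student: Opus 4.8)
The plan is to prove Lemma \ref{L.2.2} by rewriting the equation \eqref{2.2} in a form that controls the derivative of $J*\phi$ directly, since that is the quantity appearing in both estimates. Starting from $H\phi + a\phi = \la\phi$, i.e. $(J*\phi) - \phi + a\phi = \la\phi$, I would solve for $\phi$ in terms of $J*\phi$ to get the pointwise algebraic relation $\phi(x) = (J*\phi)(x)/(1+\la - a(x))$. Because $\la > a_+$ and $a_- \le a(x) \le a_+$, the denominator satisfies $1 + \la - a_+ \le 1 + \la - a(x) \le 1 + \la - a_-$, so $\phi$ and $J*\phi$ are comparable up to constants depending only on $\la - a_\pm$. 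This is the key structural observation: it converts the non-local eigenvalue equation into a clean relation that lets me transfer estimates between $\phi$ and its mollification.

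For part (i), I would differentiate $J*\phi$. Since $J \in C^1$ with $\supp J = [-\de,\de]$, for $x \ge b + \de$ the convolution $(J*\phi)(x) = \int_{-\de}^{\de} J(y)\phi(x-y)\,dy$ only samples $\phi$ on $[x-\de, x+\de] \subseteq [b,\infty)$, where $\phi$ solves \eqref{2.2}. Then $(J*\phi)'(x) = \int_{-\de}^{\de} J'(y)\phi(x-y)\,dy = (J'*\phi)(x)$. Using $\phi(x-y) = (J*\phi)(x-y)/(1+\la-a(x-y))$ and the comparability above, together with $\|J'\|_1 < \infty$, I would bound $|(J'*\phi)(x)|$ by a constant times $\sup_{|y|\le\de}(J*\phi)(x-y)$. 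To close this into a bound by $(J*\phi)(x)$ itself I would first establish a local Harnack-type comparison: because $\phi>0$ and $J$ is positive near $0$, values of $\phi$ (hence of $J*\phi$) at nearby points are mutually comparable on any interval of length $O(\de)$. Once $|(J*\phi)'(x)| \le C (J*\phi)(x)$ holds, the exponential bound \eqref{new-2.4} follows immediately by integrating the differential inequality $\left|\tfrac{d}{dx}\log(J*\phi)(x)\right| \le C$ over $[x,y]$ (or $[y,x]$).

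For part (ii), the additional hypothesis $\lim_{x\to\infty}\phi(x)=0$ must be upgraded to an exponential decay rate $m$. Here I would exploit that $J*\phi - \phi = (\la - a(x))\phi \ge (\la - a_+)\phi > 0$, so $(J*\phi)(x) \ge (1 + \la - a_+)\phi(x)$ means the convolution strictly dominates $\phi$ by a factor bounded away from $1$. The idea is that each convolution step forces a definite multiplicative decay: since $J$ averages $\phi$ over a window and $\phi$ is (eventually) monotone-ish and comparable on the window by part (i), the relation $\phi(x) = (J*\phi)(x)/(1+\la-a(x))$ with denominator $\ge 1 + (\la - a_+)$ should iterate to produce geometric decay over each length scale $\de$. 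Concretely, I would look for the decay constant $m$ as the solution of a characteristic-type equation $\int_{-\de}^{\de} J(y)e^{my}\,dy = 1 + \la - a_+$ (mirroring the constant-coefficient computation $\int J(y)e^{p_\la y}dy = 1+\la-a$ in the introduction), and then verify that $e^{-m x}$ is, up to the comparability constants, a super-solution controlling $J*\phi$; combining with the two-sided bound between $\phi$ and $J*\phi$ yields \eqref{new-2.5} with prefactor $C/m$.

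The main obstacle I anticipate is part (ii): turning qualitative decay into the quantitative exponential rate $m$ depending only on $J, \la-a_-, \la-a_+$. The difficulty is that the spatial inhomogeneity $a(x)$ prevents a direct separation-of-variables or explicit-exponential solution, so I cannot simply plug $e^{-mx}$ into \eqref{2.2}. Instead I expect to argue via a comparison principle for the convolution operator, constructing $e^{-m(x-x_0)}(J*\phi)(x_0)$ as a barrier and using the sign of $\la - a_+$ together with the monotonicity \ref{J1} of $J$ to ensure the barrier is not crossed; the technical heart is verifying the right inequality $\int_{-\de}^\de J(y)e^{my}\,dy \le 1+\la-a(x)$ holds for the chosen $m$ uniformly in $x$, which forces $m$ small enough that $\int J(y)e^{my}dy \le 1 + \la - a_+$, and then showing this $m$ genuinely controls the decay rate rather than merely being consistent with it.
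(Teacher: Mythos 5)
Your overall architecture is reasonable, and your key structural observation --- rewriting \eqref{2.2} as $\phi(x)=(J*\phi)(x)/(1+\la-a(x))$ so that $\phi$ and $J*\phi$ are pointwise comparable --- is exactly the paper's starting point. But there is a genuine gap at the heart of your part (i). After bounding $|(J*\phi)'(x)|=|(J'*\phi)(x)|\le \|J'\|_1\sup_{|y|\le\de}\phi(x-y)$, you close the estimate by invoking a ``local Harnack-type comparison,'' justified by saying that ``$\phi>0$ and $J$ is positive near $0$'' force nearby values of $\phi$ to be mutually comparable. That is not a proof: positivity alone gives no comparability at all, and for solutions of \eqref{2.2} comparability with constants depending only on $J$ and $\la-a_\pm$ is precisely the Harnack-type inequality whose proof is the content of this lemma (the paper cites Coville's work for the general statement, and notes it follows from \eqref{new-2.4}). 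Indeed, \eqref{new-2.4} restricted to $|x-y|\le\de$ \emph{is} your local Harnack claim, so your chain of implications (local Harnack $\Rightarrow$ derivative bound $\Rightarrow$ \eqref{new-2.4}) assumes what it sets out to prove. The paper breaks this circle with a small but essential idea you are missing: the pointwise kernel inequality $|J'|\le C_J\, (J*J)$ with $C_J=\|J'\|_\infty/\inf_{[-\de,\de]}(J*J)$, valid because $\supp J'\subseteq[-\de,\de]$ and $J*J>0$ on $(-2\de,2\de)$. This gives $|(J'*\phi)(x)|\le C_J\,(J*J*\phi)(x)=C_J\int J(y)(J*\phi)(x-y)\,dy\le C_J(1+\la-a_-)(J*\phi)(x)$, the last step using the equation at the points $x-y$ with $|y|\le\de$ (this is exactly where $x\ge b+\de$ enters) --- no prior comparability needed. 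Some argument of this kind, exploiting the equation via an iterated convolution, is required where you wrote ``because $\phi>0$.''

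Your part (ii), by contrast, outlines a route genuinely different from the paper's, and it can be made to work modulo the gap above. The paper proves the integral inequality $(J*\phi)(x)\ge m\int_x^\infty(J*\phi)(\tau)\,d\tau$ (splitting $\int_x^R$ into $[x,x+\de]$ and $[x+\de,R]$ and using the equation plus \eqref{new-2.4}), then integrates the resulting differential inequality. Your plan --- choose $m$ with $\int_{-\de}^{\de} J(y)e^{my}\,dy\le 1+\la-a_+$ and compare $\phi$ against the barrier $Ae^{-m(y-x)}$ --- is sound: the operator $\psi\mapsto J*\psi-(1+\la-a)\psi$ obeys a maximum principle on half-lines precisely because $\la>a_+$ makes the zeroth-order coefficient at least $1+\la-a_+>1=\|J\|_1$ (evaluate at an interior positive maximum of $\phi$ minus the barrier, which exists since both tend to $0$ at $+\infty$; this is where the hypothesis $\lim_{x\to\infty}\phi(x)=0$ is used), and the monotonicity of $J$ plays no role. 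Two caveats: the barrier must dominate $\phi$ on the whole nonlocal boundary strip $[x-\de,x]$, not just at the single point $x$, so you need the comparability from part (i) to choose $A\le C'(J*\phi)(x)$; and you leave exactly this verification, plus the maximum-principle step, as ``expectations'' rather than arguments. Filled in, your version would be a legitimate alternative to the paper's part (ii), with an arguably cleaner and sharper decay rate (depending only on $J$ and $\la-a_+$, versus the paper's $m=\de^{-1}e^{-C\de}(\la-a_+)(1+\la-a_+)^{-1}$).
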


{\it Remark.}  \eqref{new-2.4} and \eqref{2.2} imply $\phi(y)\le (1+\la -a_-) e^{C|x-y|} \phi (x)$ for $x,y\in[b+\delta,\infty)$ (note that $1+\lambda-a_+\ge 1$). This is a special case of the main result in \cite{CovHarnack}. 

\begin{proof}
(i) Let us  rewrite \eqref{2.2} for $x\ge b$ as
\begin{equation} \lb{2.6} 
(J*\phi)(x)=(1+\la-a(x))\phi(x).
\end{equation}

Since $\supp\, J'\subseteq [-\delta,\delta]$ 
and $J*J>0$ on $(-2\de,2\de)$, we have
\[
C_J:= \frac{\|J'\|_\infty}{\inf_{x\in[-\de,\de]}(J*J)(x)}>0
\]
and $|J'(x)|\le C_J(J*J)(x)$ for $x\in\bbR$.
Hence, by $\phi,J\ge 0$ and \eqref{2.6}, we have for $x\ge b+\delta$,
\[ |(J*\phi)'(x)|\le (|J'|*\phi)(x)
\le C_J (J*J*\phi)(x)
\le C_J(1+\la-a_-) (J* \phi)(x) 
\]
(we need $x\ge b+\delta$ in the last inequality). This is \eqref{new-2.3} when we take $C:=C_J(1+\la -a_-)$. 
 
(ii) We first claim that there is  $m=m(J,\la-a_-,\la-a_+)>0$ such that for $x\ge b+\delta$,
\begin{equation}
\lb{2.9}(J*\phi)(x)\ge m \int_x^\infty (J*\phi)(\tau)d\tau.
\end{equation}
Let us assume this is the case.   
Then $[e^{mx}\int_x^\infty (J*\phi)(\tau)d\tau]'\le 0$, so for $y\ge x\ge b+\delta$, 
\begin{equation}
\lb{2.10}\int_y^\infty (J*\phi)(\tau)d\tau\le e^{-m(y-x)}\int_x^\infty (J*\phi)(\tau)d\tau.
\end{equation}
Hence, by this, \eqref{new-2.3}, and \eqref{2.9},
\begin{align*}
(J*\phi)(y)&=-\int_y^\infty (J*\phi )'(\tau)d\tau
\le C \int_y^\infty (J*\phi)(\tau)d\tau\\
&\quad \le Ce^{-m(y-x)}\int_x^\infty (J*\phi)(\tau)d\tau
\le \frac{C}{m}  e^{-m(y-x)} (J*\phi)(x).
\end{align*}

It remains to prove \eqref{2.9}.   We have $\lim_{x\to\infty} (J*\phi)(x)=0$ by the hypothesis.  Hence for each $\eps>0$, there is $R_0$ such that $\sup_{y>R_0}(J*\phi)(y)<\eps$. Let $R:=\max\{R_0,b+\delta,\tfrac 1\eps\}$. Then, 
\begin{equation}
\int_x^R (J*\phi)(\tau )d\tau =\int_x^{x+\de} (J*\phi)(\tau)d\tau +\int_{x+\de}^R (J*\phi)(\tau)d\tau
=I+II. \lb{2.11}
\end{equation}
By \eqref{new-2.4}, $I\le \de e^{C\de}(J*\phi)(x)$. On the other hand, by \eqref{2.6} and \eqref{J2},
\[
II\le \int_{x}^{R+\de } \phi(\tau) d\tau=\int_x^{R+\de} \frac{(J*\phi)(\tau)}{1+\la-a(\tau)}d\tau
\le \frac 1{1+\la-a_+} \int_{x}^R (J*\phi)(\tau)d\tau+  \frac {\eps \de}{1+\la-a_+}.
\]
The estimates for $I$ and $II$ and \eqref{2.11} now yield
\[ 
\de e^{C\de} (J*\phi)(x)\ge \frac{\la-a_+}{1+\la -a_+} \int_x^R (J*\phi)(\tau)d\tau-\frac{\eps\de}{1+\la-a_+}, 
\]
and \eqref{2.9} follows by letting $\eps\to 0$, with $m:=\delta^{-1}e^{-C\delta}(\lambda-a_+)(1+\lambda-a_+)^{-1}$.
\end{proof}

\begin{proof}[Proof of Lemma \ref{L.2.1}] 
Obviously, $\la\notin \si(H+a(x))$ by $\la>a_+$. 
Let $0\not\equiv \eta\le 0$ be continuous and compactly supported and let $\varphi := (H+a(x)-\la)^{-1}\eta\in L^2(\bbR)$. Since
$J\in L^2(\bbR)$ as well, $J*\varphi$ is uniformly continuous and $\lim_{|x|\to\infty} (J*\varphi)(x)=0$.  Since also, 
\[ \varphi=\frac{J*\varphi-\eta}{1+\la-a}, \]
 $\varphi$ is continuous and $\lim_{|x|\to\infty} \varphi(x)=0$. 

Furthermore, $\varphi>0$. Indeed, otherwise $\varphi$ achieves a non-positive minimum, and since $\varphi\not\equiv 0$, the set of global minima of $\varphi$ has a boundary point $x_0$.  
From the properties of $J$ now follows that $(H\varphi)(x_0)> 0$.   But then $\eta(x_0)=(H\varphi)(x_0)+(a(x_0)-\la)\varphi(x_0)> 0$ by $\la>a_+$, contradicting $\eta\le 0$.  Thus $\varphi>0$, and Lemma \ref{L.2.2} applies to $\varphi$.

Let us choose $\eta$ with $\supp\, \eta=[-1,0]$, define $\eta_j(x):=\eta(x+j)$,  $\varphi_j:= (H+a(x)-\la)^{-1}\eta_j$, and $\phi_j:=\varphi(0)^{-1} \varphi_j$ (recall that $\varphi(0)>0$).  Then $\phi_j$ solves \eqref{2.6} on $[-j,\infty)$, so Lemma \ref{L.2.2}(i) gives
for $x\ge -j+\delta$, 
\[
\left| \left[ \log \left ([1+\la -a(x)]\phi_j(x) \right) \right]' \right|= \left| \left[ \log (J*\phi_j) \right]'(x) \right|\le C.
\]
Since also $\log \phi_j(0)=0$, 
there is a locally uniform limit $\Phi>0$ for some subsequence of \{$(1+\la -a(x))\phi_j(x)\}_j$. Let $\phi(x) := (1+\la -a(x))^{-1}\Phi(x)$, which is positive and continuous. We have $\phi_j\to \phi$ locally uniformly because $1+\lambda-a_+\ge 1$, 
hence $\phi$ solves \eqref{2.2} on $\mbr$. 

By Lemma \ref{L.2.2}(ii) and \eqref{2.6}, for any $j$ and $y\ge x\ge -j+\delta$ 
we have 
\[ \phi_j(y)\le (J*\phi_j)(y) \le  \frac{C}{m} (1+\la -a_-)e^{-m(y-x)}\phi_j(x).\]
Hence Lemma \ref{L.2.1} holds with $\phi_\lambda:=\phi$ and $L:=\max\{\tfrac 1m\log \frac{2C(1+\lambda-a_-)}m,\delta\}$. 
\end{proof}

\section{Proof of Theorem \ref{T.1.1}(ii) (Construction of a Sub-Solution)} \lb{S3}

We now turn to the construction of sub-solutions of \eqref{1.1}, extending the method from \cite{ZlaInhomog}.  
The function $h_g$ will be taken from a family of functions $\{h_{g,\al}\}_{\al\in(0,1)}$ satisfying \eqref{1.32}, which have been constructed in \cite{ZlaInhomog} (we note that our $h_{g,\al}$ equals $h_{g,\al^2}$ from \cite{ZlaInhomog}).

It was proved in \cite{Uchi}  that under the hypotheses (G) and for each $\al\in(0,1)$, the homogeneous PDE $u_t=u_{xx}+g(u)$ (with classical diffusion) has a (unique) {\it traveling front} solution $u(x,t)=U_{g,\alpha}(x-c_\alpha t)\in(0,1)$ (with $c_\alpha:=\al+\al^{-1}$) which satisfies $\lim_{s\to\infty} e^{\alpha s}U_{g,\alpha}(s)=1$.  The pair $(U_{g,\al},c_\al)$ here solves the traveling front boundary value problem
\begin{equation} \lb{3.2}
U_{g,\al}''+c_\al U_{g,\al}'+g(U_{g,\al})=0, \qquad  \lim_{s\to-\infty} U_{g,\al}(s)=1, \qquad \lim_{s\to\infty} U_{g,\al}(s)=0, 
\end{equation}
whose solutions  are (up to translation in $s$) precisely $\{(U_{g,\al},c_\al)\}_{\al\in(0,1]}$.  They satisfy $U_{g,\al}'<0$ on $\bbR$, and the critical front $U_{g,1}$ (which we will not use) satisfies  $\lim_{s\to\infty} s^{-1}e^{ s}U_{g,1}(s)=1$.

The linearization $v_t=v_{xx}+v$ of $u_t=u_{xx}+g(u)$ at $u=0$ has corresponding traveling front solutions $v(x,t)=e^{-\alpha (x-c_\al t)}$, and $h_{g,\al}$ is chosen to be the function which takes $e^{-\al s}$ to $U_{g,\al}(s)$ for $\al\in(0,1)$.  That is,  
\begin{equation}
\lb{3.1}
h_{g,\al}(v):=\left\{ 
\begin{array}{ll}
U_{g,\al}(-\al^{-1}\log v) & v>0, \\ 
0 & v=0.
\end{array} \right. 
\end{equation}
Notice that \eqref{3.2} yields
\begin{equation} \lb{3.3} 
\al^2 v^2 h''_{g,\al}(v)-vh'_{g,\al}(v)+g(h_{g,\al}(v))=0,
\end{equation} 
and \eqref{1.32} follows from the definition of $h_{g,\al}$, with $h_{g,\al}'(0)=1$ due to $\lim_{s\to\infty} e^{\alpha s}U_{g,\alpha}(s)=1$, and $h_{g,\al}''<0$ proved in \cite{ZlaInhomog} (also in Lemma \ref{L.5.1} below). 

It turns out that the same $h_{g,\al}$ can be used for our non-local diffusion problem \eqref{1.1}.  To do that, we will need the following two lemmas, whose proofs we postpone until after the proof of Theorem \ref{T.1.1}.

\begin{lemma} \lb{L.3.1} 
Let $g$ satisfy (G) and for $\al\in(0,1)$ let $\be:=2+\al^{-2}$ and $h_{g,\al}$ be from \eqref{3.1}.  Then $\rho_{g,\al}(x):=-h''_{g,\al}(e^{-x})>0$ satisfies $|\rho_{g,\al}'(x)|\le \be\rho_{g,\al}(x)$ for $x\in\bbR$ and, in particular, $\rho_{g,\al}(y)\le e^{\be|x-y|}\rho_{g,\al}(x)$ for $x,y\in\bbR$.
\end{lemma}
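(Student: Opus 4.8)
The plan is to reduce the claimed bound to a sharp pointwise inequality for the traveling front $U:=U_{g,\al}$ and then to prove the latter by a maximum-principle argument for an associated Riccati equation. Throughout I write $h:=h_{g,\al}$, $\rho:=\rho_{g,\al}$ and $c_\al:=\al+\al^{-1}$, and I use that $g\in C^1$ forces $U\in C^3$ (bootstrapping $U''=-c_\al U'-g(U)$), so $h\in C^3$ on $(0,\infty)$, $\rho\in C^1$, and $\rho>0$ by the recorded inequality $h''<0$. Since $\rho(y)\le e^{\be|x-y|}\rho(x)$ follows from $|\rho'|\le\be\rho$ by integrating $|(\log\rho)'|\le\be$, it suffices to establish $|\rho'|\le\be\rho$.

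First I would substitute $v=e^{-x}$, so that $\rho(x)=-h''(v)$ and $\rho'(x)=vh'''(v)$, whence $|\rho'/\rho|=|vh'''/h''|$ and it is enough to prove $|vh'''/h''|\le\be=2+\al^{-2}$. Differentiating \eqref{3.3} gives $\al^2v^2h'''=(1-2\al^2)vh''+(1-g'(h))h'$, and inserting \eqref{3.3} in the form $g(h)-vh'=\al^2v^2(-h'')>0$ yields
\[
\frac{vh'''}{h''}=(\al^{-2}-2)-T,\qquad T:=\frac{(1-g'(h))\,h'}{\al^2 v(-h'')}\ge 0,
\]
the nonnegativity coming from $g'(h)\le g'(0)=1$. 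The upper bound $vh'''/h''\le\al^{-2}-2\le\be$ is then immediate, so the lemma reduces to the lower bound $vh'''/h''\ge-\be$, that is, to $T\le 2\al^{-2}$.

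Next I would pass to front variables $s=-\al^{-1}\log v$, $Z:=-U'>0$. A direct computation gives $vh'=Z/\al$ and $\al^2v^2(-h'')=g(U)-\al^{-1}Z$, hence $T=\tfrac{(1-g'(U))Z}{\al g(U)-Z}$; using $Z'=-c_\al Z+g(U)$ one checks that $T\le 2\al^{-2}$ is equivalent to
\[
\omega:=\frac{Z'}{Z}=\big(\log(-U')\big)'\ \ge\ -\tfrac{\al}{2}\big(1+g'(U)\big).
\]
This is the crux. Here $\omega$ solves the Riccati equation $\omega'=-(\omega^2+c_\al\omega+g'(U))$, whose right-hand side has roots $\omega_\pm(s)=\tfrac12\big(-c_\al\pm\sqrt{c_\al^2-4g'(U(s))}\big)$. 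I would first show $\omega\ge\omega_+$: the root $\omega_+$ is non-increasing in $s$ (as $g'$ is decreasing and $U$ is decreasing, $g'(U(s))$ is non-decreasing); one has $\omega>\omega_-$ throughout, since $h''<0$ gives $\omega>-\al$ while $\omega_-\le-\al^{-1}<-\al$; and $\omega-\omega_+\to0$ as $s\to\pm\infty$. If $\omega-\omega_+$ had a negative interior minimum at $s_*$, then $\omega(s_*)\in(\omega_-,\omega_+)$ would force $\omega'(s_*)>0$, while $\omega_+$ non-increasing makes $\omega-\omega_+$ strictly smaller just left of $s_*$, contradicting minimality. Finally $\omega_+\ge-\tfrac\al2(1+g'(U))$ is elementary: writing $\sigma:=g'(U)\in[-1,1]$, it reads $\sqrt{c_\al^2-4\sigma}\ge\al^{-1}-\al\sigma$, which after squaring is $(1-\sigma)\big[\al^2(1+\sigma)+2\big]\ge0$. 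Chaining $\omega\ge\omega_+\ge-\tfrac\al2(1+g'(U))$ gives $T\le2\al^{-2}$, finishing the proof.

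The main obstacle is the global step $\omega\ge\omega_+$, the only place where the full front $U$ (not just the local profile ODE) enters: one must justify the two limits of $\omega-\omega_+$ at $s\to\pm\infty$ from the approach rates of the front at $U=0$ and $U=1$ --- the case $g'(1)=0$, where the approach to $U=1$ is only algebraic, needs separate care --- and one must invoke the already-established sign $h''<0$ to keep $\omega$ above the lower root $\omega_-$. The reduction steps and the algebraic inequality $\omega_+\ge-\tfrac\al2(1+g'(U))$, by contrast, are routine.
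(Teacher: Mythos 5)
Your reductions are correct and, once unwound, land exactly where the paper does: your inequality $T\le 2\al^{-2}$ is algebraically identical to \eqref{5.5}, and your key claim $\om\ge\om_+$ is precisely the strengthened inequality $-U'\le q(g'(U))\,g(U)$ of Lemma \ref{L.5.1}, since $1/q(v)=\tfrac12\bigl(c_\al+\sqrt{c_\al^2-4v}\bigr)$ and $\om=-c_\al+g(U)/(-U')$ (your claim coincides with the paper's where $g'(U)\ge 0$ and is slightly stronger where $g'(U)<0$, where the paper caps $q$ at $1/c_\al$). So the only substantive difference is how this one inequality is proved: the paper traps the orbit $(U,U')$ in the invariant region $D_x$ and gets a contradiction from $U'(y_0)>0$, while you run a minimum principle for the Riccati variable $\om=(\log(-U'))'$. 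Your route can be made to work, but as written it has a genuine gap exactly at the step you yourself call the crux: the minimum principle needs $\liminf_{s\to\pm\infty}(\om-\om_+)\ge 0$, and you assert the limits are $0$ without proof. At $s\to+\infty$ this is harmless — you already have $\om>-\al$ (from $h''<0$, cited from \cite{ZlaInhomog}) and $\om_+\to-\al$ because $g'(U)\to g'(0)=1$, so the needed liminf is automatic. At $s\to-\infty$, however, the claim amounts to identifying the logarithmic derivative of $U'$ at the $U=1$ end of the front: for $g'(1)<0$ it requires the unstable-manifold theorem at the saddle $(1,0)$ (standard, but it must be invoked), and for $g'(1)=0$, which \eqref{G2} allows, the equilibrium is degenerate, the approach is only algebraic, and you give no argument at all. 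The paper's invariant-region proof needs none of this; its only soft input at $-\infty$ is $\lim_{y\to-\infty}U'(y)=0$.

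The gap is fillable without any front asymptotics if you replace the interior-minimum argument by backward trapping. Suppose $\om(s_0)<\om_+(s_0)$. Since $\om>\om_-$ everywhere and $\om_+$ is non-increasing, the set $A$ of $s\le s_0$ with $\om<\om_+$ on $[s,s_0]$ is open and closed in $(-\infty,s_0]$: on $A$ one has $\om'>0$, hence $\om(s)<\om(s_0)<\om_+(s_0)\le\om_+(s)$, which keeps the inequality strict up to the closure. Thus $\om<\om_+$ and $\om'>0$ on all of $(-\infty,s_0]$, so $\om$ is monotone and bounded there and $\om_{-\infty}:=\lim_{s\to-\infty}\om(s)$ exists. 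Since $\om_\pm$ also converge (as $g'(U(s))\to g'(1)$), $\om'=-(\om-\om_+)(\om-\om_-)$ converges, and boundedness of $\om$ forces that limit to be $0$, i.e.\ $\om_{-\infty}$ must equal one of the limiting roots. But $\om_{-\infty}\le\om(s_0)<\om_+(s_0)\le\lim_{s\to-\infty}\om_+$ and $\om_{-\infty}\ge-\al>-\al^{-1}\ge\lim_{s\to-\infty}\om_-$, a contradiction. With this repair your argument becomes a complete and genuinely different proof of Lemma \ref{L.5.1}, using only continuity, monotonicity of $\om_+$, and the bound $\om\ge-\al$ from \cite{ZlaInhomog} — though note the paper's own proof does not need that last external input, since Lemma \ref{L.5.1} re-derives $h''<0$.
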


\begin{lemma} \lb{L.3.2} 
Let $\phi_\la>0$ satisfy \eqref{2.2} with $\la>a_+$ and \eqref{1.6}. For each $s>0$ there is $\gamma_s=\ga_s (J)>0$ with $\lim_{s\searrow 0}\gamma_s= 0$ and such that if $|x-y|\le \de$ (with $\de$ from \eqref{J2}), then 
\begin{equation}
|\phi_\la(x)-\phi_\la(y)|\le \gamma_{\la-a_-}\phi_\la(y).
\end{equation}
\end{lemma}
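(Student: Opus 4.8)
<br>

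The goal is to control the oscillation of $\phi_\la$ on intervals of length at most $\de$, showing it is small when $\la - a_-$ is small. The plan is to exploit the equation \eqref{2.2} in the form \eqref{2.6}, namely $(J*\phi_\la)(x) = (1+\la-a(x))\phi_\la(x)$, together with the exponential decay/growth estimate from Lemma \ref{L.2.2}. The key observation is that since $\phi_\la$ solves \eqref{2.2} on all of $\mbr$ (so we may take $b=-\infty$), the derivative bound \eqref{new-2.3} holds everywhere, giving $|(J*\phi_\la)'(x)| \le C (J*\phi_\la)(x)$ with $C = C(J, \la-a_-)$. The crucial point I would track is how $C$ behaves as $\la - a_- \to 0$: recalling $C = C_J(1+\la-a_-)$, we have $C \to C_J$, a finite constant depending only on $J$, so $C$ stays bounded but does \emph{not} vanish. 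This means the naive estimate $\phi_\la(y) \le e^{C|x-y|}\phi_\la(x)$ gives oscillation bounded by $e^{C\de}-1$, which is a fixed positive number, not something tending to $0$. So the derivative bound on $J*\phi_\la$ alone is insufficient, and the real content must come from relating the oscillation of $\phi_\la$ back to itself through the convolution structure.

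First I would pass from $J*\phi_\la$ to $\phi_\la$ itself. From \eqref{2.6}, writing $\psi := J * \phi_\la$, we have $\phi_\la = \psi/(1+\la - a)$, and since $a \in [a_-, a_+]$ with $a_+ < \la$, the multiplier $(1+\la-a)^{-1}$ lies in $[(1+\la-a_-)^{-1}, (1+\la-a_+)^{-1}]$, an interval whose length is $O(a_+ - a_-)$. The oscillation of $\phi_\la$ over an interval of length $\de$ thus has two sources: the oscillation of $\psi$, and the variation of the multiplier. Rather than bounding $a$'s variation directly (which is not assumed small pointwise), I expect the argument to be self-referential: one writes $\phi_\la(x) - \phi_\la(y) = \int J(z)[\phi_\la(x-z) - \phi_\la(y-z)]\,dz$ type expressions, or uses \eqref{2.6} to express the difference $\psi(x) - \psi(y)$ as an average of $\phi_\la$ differences, and then closes a fixed-point / bootstrap inequality for the oscillation quantity
\[
\omega := \sup\Big\{ \frac{|\phi_\la(x) - \phi_\la(y)|}{\phi_\la(y)} : |x-y| \le \de \Big\}.
\]

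The main step is to derive an inequality of the form $\omega \le K(\la - a_-)\,(1 + \omega)$ or $\omega \le K(\la-a_-) + \eps \omega$ with $K(s) \to 0$ as $s \searrow 0$ and $\eps < 1$, from which $\omega \le \ga_{\la-a_-}$ with the desired limit follows. The self-referential mechanism is the following: from \eqref{2.6}, $\la \phi_\la = J*\phi_\la - \phi_\la - a \phi_\la + \la \phi_\la$... more cleanly, $H\phi_\la = (J*\phi_\la) - \phi_\la = (\la - a)\phi_\la$, so $\phi_\la(x) - \phi_\la(y) = \int J(z)[\phi_\la(x-z)-\phi_\la(y-z)]\,dz - (\la-a(x))\phi_\la(x) + (\la-a(y))\phi_\la(y)$. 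The convolution term is again an average of $\phi_\la$-differences over shifts $z \in [-\de,\de]$, each bounded by $\omega \phi_\la$ up to the Harnack-type comparison from Lemma \ref{L.2.2}, while the remaining terms carry an explicit factor $\la - a_\pm \le \la - a_-$. I would choose $s = \la - a_-$ small enough that the coefficient multiplying $\omega$ is strictly below $1$ (using that the convolution of differences cannot amplify oscillation by more than a factor involving $\int J = 1$, with a strict gain because shifts partially overlap), absorb that term, and solve for $\omega$.

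The hard part will be making the self-referential oscillation bound actually \emph{contract} rather than merely reproduce $\omega$ on both sides. Since $\int_{-\de}^\de J = 1$, the convolution averaging of $\phi_\la$-differences has operator ``norm'' exactly $1$ on the oscillation, so without extra care one only recovers $\omega \le \omega + (\text{small})$, which is vacuous. The gain must come either from the strict positivity and monotonicity of $J$ (so that the kernel genuinely smooths and the worst-case oscillation is not transported undamped), or from comparing $\phi_\la$ at nearby points via the exponential bound $e^{-m(y-x)}$ of Lemma \ref{L.2.2}(ii), which contributes a factor strictly less than $1$ over positive shifts and whose rate $m = m(J,\la-a_-,\la-a_+)$ I would need to track carefully as $\la - a_- \to 0$. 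Resolving how $m$ degenerates in this limit, and confirming that the net coefficient of $\omega$ stays bounded below $1$ uniformly as $s \searrow 0$, is where I expect the genuine difficulty to lie; everything else is bookkeeping on the explicit constants $C$, $m$, and the multiplier gap.
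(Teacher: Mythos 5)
Your diagnosis of the difficulty is exactly right: the Harnack-type constant $C=C_J(1+\la-a_-)$ from Lemma \ref{L.2.2} does not vanish as $\la-a_-\searrow 0$, so no bound on $(J*\phi_\la)'$ alone can prove the lemma. But the bootstrap you propose for $\omega:=\sup_{0<|x-y|\le\de}|\phi_\la(x)-\phi_\la(y)|/\phi_\la(y)$ does not close, and you essentially concede this yourself. Writing $\phi_\la(x)-\phi_\la(y)=[(J*\phi_\la)(x)-(J*\phi_\la)(y)]-[(\la-a(x))\phi_\la(x)-(\la-a(y))\phi_\la(y)]$ and bounding the convolution term by $\omega\,(J*\phi_\la)(y)=\omega(1+\la-a(y))\phi_\la(y)$ gives only $\omega\le\omega(1+\la-a_-)+(\la-a_-)(2+\omega)$, which is vacuous: the coefficient of $\omega$ on the right is at least $1$, so nothing can be absorbed. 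Neither of your two proposed rescue mechanisms supplies the missing contraction. The decay rate $m=m(J,\la-a_-,\la-a_+)$ of Lemma \ref{L.2.2}(ii) degenerates ($m\to 0$) precisely in the limit $\la-a_-\searrow 0$ you care about, since $a_+\ge a_-$ forces $\la-a_+\to 0$ as well; and positivity/monotonicity of $J$ is not by itself a quantitative smoothing statement at scale $\de$, because averaging against $J$ cannot reduce oscillation over distances comparable to $\diam \supp J$.

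The idea your outline is missing is the paper's conversion of the nonlocal operator into a genuine second derivative. One introduces $\ka:=H[\,|x|/2\,]$, which is supported in $[-\de,\de]$, satisfies $0\le\ka\le\tfrac{\de^2}{2}J$, and has the key property $(\ka*\varphi)''=H\varphi$. For the normalized smoothing $\psi:=\|\ka\|_{L^1}^{-1}(\ka*\phi_\la)$, equation \eqref{2.2} then becomes the differential identity $\psi''=\|\ka\|_{L^1}^{-1}(\la-a(x))\phi_\la$, so $\psi''\le\mu^2\psi$ with $\mu=\mu(J,\la-a_-)\to 0$ as $\la-a_-\searrow 0$; here the crude, non-vanishing Harnack constant $C$ \emph{is} good enough, because it only multiplies the already small factor $\la-a_-$ (this is where your observation that $C\to C_J$ stops being an obstruction). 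Since $\psi>0$, $\psi''>0$, $\psi'<0$, the Riccati quantity $Q:=-\psi'/\psi$ satisfies $Q'\ge Q^2-\mu^2$, and a blow-up argument forces $Q\le\mu$; thus $\psi$ has oscillation $O(\mu\de)$ at scale $\de$, a genuinely small quantity. Finally, a sup/inf argument on the ratio $\psi/\phi_\la$ (Lemma \ref{L.6.2}), again using the equation at near-extremal points, gives $l_{\la-a_-}\phi_\la\le\psi\le L_{\la-a_-}\phi_\la$ with $l_s,L_s\to 1$, transferring the flatness of $\psi$ back to $\phi_\la$ and yielding $\gamma_s:=l_s^{-1}L_se^{\mu\de}-1\to 0$. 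It is this local reformulation via $\ka$ that manufactures a small parameter where your bootstrap produces none.
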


{\it Remark.}
 Lemma \ref{L.3.2} is an improvement of the remark after  Lemma \ref{L.2.2}. 
 \smallskip

Let  $h_g :=h_{g,3/4}$, with $h_{g,\al}$ from \eqref{3.1}.
We will suppress the subscripts $g,\la$ in what follows, denoting $w=w_\la=h_g(v_\la)=h(v)$. Then by \eqref{1.5} and \eqref{2.2}, 
\[
w_t-Hw = h'(v)Hv+a(x)vh'(v)-\int_{-\delta}^{\de} J(y)[w(x-y,t)-w(x,t)]dy. \lb{3.5}
\]
By Taylor's theorem for $h(v)$ we have 
\[
w(x-y,t)-w(x,t) = h'(v(x,t))[v(x-y,t)-v(x,t)]+\frac 12 h''(\zeta_{x,y,t})[v(x-y,t)-v(x,t)]^2,
\]
where $\zeta_{x,y,t}$ is some number between $v(x-y,t)$ and $v(x,t)$. This and the definition of $Hv$ yield
\beq \lb{3.6}
w_t-Hw=a(x)vh'(v)-\frac 12 \int_{-\de}^{\de} h''(\zeta_{x,y,t})J(y)[v(x-y,t)-v(x,t)]^2 dy.
\eeq
Since $\zeta_{x,y,t}$ is between $v(x-y,t)$ and $v(x,t)$ (and $|y|\le\del$), Lemma \ref{L.3.2} implies 
\[
|\log \zeta_{x,y,t}-\log v(x,t)|\le \log (1+\gamma_{\la-a_-}).
\]
 Lemma \ref{L.3.1} with $\beta=2+(3/4)^{-2}<4$ now gives
\beq \lb{3.7}
-h''(\zeta_{x,y,t})=\rho(-\log \zeta_{x,y,t})
\le e^{4 \log (1+\gamma_{\la-a_-})}\rho(-\log v(x,t))
= -(1+\gamma_{\la-a_-})^4 h''(v(x,t)).
\eeq
On the other hand, by Lemma \ref{L.3.2}, 
\begin{equation}
\lb{3.8}\int_{-\de}^{\de} J(y)[v(x-y,t)-v(x,t)]^2dy\le \gamma_{\la-a_-}^2v(x,t).
\end{equation}
Using \eqref{3.7}, \eqref{3.8}, and $h''<0$, we obtain from \eqref{3.6},
\begin{equation}
\lb{3.9}w_t-Hw\le a(x) vh'(v)-\frac 12 \gamma_{\la-a_-}^2(1+\gamma_{\la-a_-})^4 v^2 h''(v).
\end{equation}

Since $\lim_{s\searrow 0}\gamma_s= 0$ by Lemma \ref{L.3.2},  there exists (non-decreasing in $a_-$) $\la_0=\la_0(J,a_-)$ such that 
\[ \gamma_{s}^2(1+\gamma_{s})^4 \le   a_-  \]
for all $s\in(0,\la_0)$.  If now $a_+<a_-+\la_0$ and $\la\in(a_+,a_-+\la_0)$, then  we have 
\[\frac 12 \gamma_{\la-a_-}^2(1+\gamma_{\la-a_-})^4\le  \left(\frac 34 \right)^2 a(x).  \]
Thus \eqref{3.9}, $h''<0$,  \eqref{3.3} for $h=h_g=h_{g,3/4}$, and \eqref{F2}  yield 
\[
w_t-Hw\le a(x)[vh'(v)-(3/4)^2 v^2 h''(v)]
=a(x)g(w)
\le f(x,w).
\]
So  $w=w_\la=h_{g}(v_\lambda)$ is a sub-solution of \eqref{1.1}.

\section{Proof of Theorem \ref{T.1.1}(iii) (Construction of a Transition Front)}  \lb{S4}

For reaction-diffusion equations with classical diffusion, there is a simple and standard way to construct a transition front for \eqref{1.1} between the  super-solution $v_\la$ and sub-solution $w_\la=h_g(v_\la)\le v_\la$ from the last two sections.  One lets  $u_n:\bbR\times (-n,\infty)\to[0,1]$ be the solution of the Cauchy problem with initial datum $u_n(x,-n)$ between $w_\la(x,-n)$ and $\min\{v_\la(x,-n),1\}$, and recovers a transition front $u_\la:\bbR^2\to[0,1]$ as a locally uniform limit along a subsequence of $\{u_n\}_{n\ge 1}$, using parabolic regularity results and the Arzel\` a-Ascoli theorem.  

Such regularization results are not available for the non-local diffusion operator $H$, as was discussed in the introduction. 
Nevertheless, $H$ does not (qualitatively) worsen the regularity of the solutions of \eqref{1.1}, so one might hope that if the initial datum is  sufficiently regular (in our case, Lipschitz or H\" older continuous would suffice) then this regularity will persist indefinitely for bounded  solutions.  In fact, a simple argument from \cite{LiSunWang} (where the homogeneous case was treated) shows that if $\sup_{(x,u)\in\bbR\times[0,1]} f_u(x,u)< 1$, then Lipschitz initial data give rise to uniformly-in-time (and $n$) Lipschitz solutions.  We do not assume such a bound here, and thus will have to prove a similar result for the sequence of solutions $u_n$ in a different way.

We consider the Cauchy problem 
\begin{equation}\lb{4.1}
\left\{
\begin{array}{ll}
u_t=Hu+f(x,u) & \text{on $\mbr\times(-n,\infty)$,} \\ 
u(x,-n)=w(x,-n) \quad(=h(v(x,-n))) & \text{on $\bbR$,}
\end{array} 
\right .
\end{equation}
where we dropped the subscripts $n,g,\la$.  
The proof of existence and uniqueness of a bounded continuous classical solution to this problem with bounded continuous initial data is standard, and identical to the homogeneous case (see, e.g., \cite{LiSunWang}).  The proofs of the maximum and comparison principles for \eqref{1.1} are also standard.  These imply, in particular, 
\beq\lb{4.2z}
w\le u\le \min\{v,1\}.
\eeq

We then obtain the following bound on $u$ from \eqref{4.1}.

\begin{lemma}\lb{L.4.1}
There is $\bar C=\bar C(J,\la-a_-,||f||_{C^2},g)$  such that the solution of \eqref{4.1} satisfies
\begin{equation}\lb{4.2}
\eta(t):=\sup_{0<|y-x|\le\del} \frac{|u(y,t)-u(x,t)|}{|y-x|u(x,t)}\le \bar  C
\end{equation}
for any $t\ge-n$.
\end{lemma}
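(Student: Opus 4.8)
The plan is to control the scale-invariant finite-difference quantity directly and propagate its bound forward in time by a maximum-principle argument applied to the associated difference-quotient function. For $0<|h|\le\de$ set
\[
W_h(x,t):=\frac{u(x+h,t)-u(x,t)}{h\,u(x,t)},
\]
so that $\eta(t)=\sup_{x,\,0<|h|\le\de}|W_h(x,t)|$. First I would bound $\eta(-n)$ uniformly in $n$. Since $u(\cdot,-n)=h_g(v_\la(\cdot,-n))$ with $v_\la(\cdot,-n)=e^{-\la n}\phi_\la$, and $h_g$ is increasing, concave, and $h_g(0)=0$, the ratio $h_g(b)/h_g(a)$ lies between $1$ and $b/a$ for $0<a<b$; this yields the pointwise comparison of relative oscillations
\[
\frac{|u(y,-n)-u(x,-n)|}{u(x,-n)}\le \frac{|v_\la(y,-n)-v_\la(x,-n)|}{v_\la(x,-n)}=\frac{|\phi_\la(y)-\phi_\la(x)|}{\phi_\la(x)},
\]
which is independent of the scale $e^{-\la n}$. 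The right-hand side is at most $C'|x-y|$ for $|x-y|\le\de$ by Lemma \ref{L.2.2}(i) (bounding $|(J*\phi_\la)'|$ by $C\,(J*\phi_\la)$ gives a relative Lipschitz bound for $J*\phi_\la$, and hence for $\phi_\la=(J*\phi_\la)/(1+\la-a)$ since $a\in C^1$). Thus $\eta(-n)\le C_0$ with $C_0=C_0(J,\la-a_-,\|f\|_{C^2})$ uniform in $n$.

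Next I would derive the evolution of $W_h$. Differentiating in $t$, using $u_t=J*u-u+f(x,u)$ and $(J*u)(x+h)-(J*u)(x)=(J*z)(x)$ for $z:=u(\cdot+h)-u(\cdot)$, the linear terms combine after cancellation into a single non-local term, and the reaction collapses through $\psi(x):=f(x,u(x))/u(x)$:
\[
\partial_t W_h(x)=\underbrace{\int_{-\de}^{\de}J(y)\,\frac{u(x-y)}{u(x)}\,[W_h(x-y)-W_h(x)]\,dy}_{=:A(x)}+\frac{u(x+h)}{u(x)}\cdot\frac{\psi(x+h)-\psi(x)}{h}.
\]
The term $A$ is the crucial non-local ``diffusion'' for $W_h$: it is a weighted average of $W_h(x-y)-W_h(x)$ against the nonnegative kernel $J(y)u(x-y)\,dy$, hence $A\le0$ at a spatial maximum of $W_h$ and $A\ge0$ at a minimum. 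The reaction factor is benign because $\psi=G(\cdot,u(\cdot))$ with $G(x,u):=f(x,u)/u$; as $f(x,0)=0$ forces $f_x(x,0)=0$, the map $G$ extends to a $C^1$ function with $\|G\|_{C^1}\lesssim\|f\|_{C^2}$, so $|\psi(x+h)-\psi(x)|\le\|G\|_{C^1}(|h|+|u(x+h)-u(x)|)$.

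Then I would run the maximum-principle step on $\eta$. Evaluating $\partial_tW_h$ at a (near-)extremizer $(x_*,h_*)$, two structural facts enter. Positivity of $u$ gives $u(x+h)/u(x)=1+hW_h(x)>0$, so $|h_*|\,\eta<1$ at the extremizer; this keeps $u(x_*+h_*)/u(x_*)$ bounded and ties the increment scale to $\eta$, whence $|u(x_*+h_*)-u(x_*)|=|h_*|u(x_*)\eta<u(x_*)$ and the reaction term is controlled by a multiple of $(1+\eta)$. The decisive input is the dissipation in $A$: sustaining $|W_h|\approx\eta$ across the whole window $|y|\le\de$ would force $u$ to vary geometrically with rate $\approx\eta$ on $[x_*-\de,x_*+\de]$, making $(J*u)(x_*)/u(x_*)$ exponentially large in $\eta$; but $(J*u)\le\|u\|_\infty\le1$, so such a profile cannot persist and $A$ instead becomes strongly negative (at a positive maximum). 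This is the mechanism by which $H$, though not smoothing, damps steep relative variation. Quantifying it gives $\tfrac{d^+}{dt}\eta(t)\le0$ whenever $\eta(t)$ exceeds a threshold $\bar C_*=\bar C_*(J,\la-a_-,\|f\|_{C^2},g)$, and a comparison argument then yields $\eta(t)\le\max\{\eta(-n),\bar C_*\}=:\bar C$ for all $t\ge-n$.

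The main obstacle is precisely this last balance: extracting from $A$ enough dissipation to dominate the $O(\eta)$ reaction contribution \emph{uniformly in time}, rather than settling for an exponential-in-$t$ Gronwall estimate (which is useless as $n\to\infty$). Closing it requires coupling the weighted non-local averaging in $A$ with the a priori bounds $0\le u\le1$ and $(J*u)\le1$, so that a large sustained relative slope is self-defeating. A secondary technical point, which I would treat by a standard approximate-maximum (penalization/doubling-of-variables) argument, is that the supremum defining $\eta(t)$ need not be attained, so the evolution must be read off at near-extremal points and the resulting inequality interpreted for the upper Dini derivative of $\eta$.
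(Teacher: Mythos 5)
Your first two steps are correct and essentially match the paper: the uniform bound on $\eta(-n)$ via concavity of $h_g$ and the relative Lipschitz bound $|\phi_\la'|\le C_1\phi_\la$ is the paper's opening move, and your evolution identity for $W_h$ is algebraically the same as the paper's equation \eqref{4.3z} for $z^s=q^s/u$. The proof breaks at your ``decisive input.'' The claimed mechanism --- that sustaining $|W_h|\approx\eta$ across a $\delta$-window makes $(J*u)(x_*)/u(x_*)$ exponentially large in $\eta$, which ``contradicts'' $(J*u)\le\|u\|_\infty\le 1$ --- is not a contradiction: $u(x_*)$ may itself be exponentially small, and then $(J*u)(x_*)/u(x_*)$ can be enormous while $(J*u)(x_*)\le 1$ holds with room to spare. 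Worse, the dissipation you want is simply absent: for an exponential profile $u(x)=e^{-px}$ one has $W_h\equiv (e^{-ph}-1)/h$ constant in $x$, so your term $A$ vanishes identically even though $\eta=(e^{p\delta}-1)/\delta$ is arbitrarily large; under $u_t=Hu+au$ such a profile persists forever with $\eta$ constant. This is exactly the point of the paper's remark that $H$ does not regularize: steep relative slopes are neutrally stable for the non-local equation, so no implication ``$\eta$ large $\Rightarrow \frac{d^+}{dt}\eta\le 0$'' can follow from the equation alone, and your argument degenerates to a Gronwall bound growing in $t+n$, useless as $n\to\infty$. A second error feeds into this: positivity of $u$ gives $hW_h>-1$, not $|h_*|\eta<1$ at a positive extremum, so the factor $u(x+h)/u(x)$ in your reaction term is uncontrolled and the forcing is in fact quadratic in $\eta$ rather than $O(1+\eta)$.

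The missing idea is that the needed bound is not dynamic but static. The paper first proves the Harnack-type inequality \eqref{4.3}, $u(y,t)\le\til C u(x,t)$ for $|x-y|\le 2\delta$, directly from the comparison sandwich \eqref{4.2z} together with concavity of $h_g$: concavity gives $\min\{v,1\}\le h_g(v)h_g(1)^{-1}\le u\,h_g(1)^{-1}$, while $\min\{v,1\}$ has bounded relative oscillation because $|\phi_\la'|\le C_1\phi_\la$. Only with \eqref{4.3} in hand do the two key coefficient bounds hold: $-J*u/u\le -1/\til C$ (a genuinely negative zeroth-order coefficient) and $|J*q^s|\le 3\delta\|J'\|_\infty\til C u$ (bounded forcing). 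Even then, uniformity in time is not automatic: the reaction contribution to $\beta$ is only small where $u\le\theta_0$, and $\le 0$ where $u\ge\theta_1$ thanks to hypothesis \eqref{F3}; the intermediate regime is crossed in a time $T$ bounded independently of $n$ and $x$ because $u\ge h_g(v)$ and $v$ grows like $e^{\la t}$, and so contributes only a fixed factor $e^{\|f\|_{C^1}T}$. Your proposal uses neither the sandwich at positive times, nor \eqref{F3}, nor this time-splitting, and without them the threshold inequality you need cannot be closed.
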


{\it Remark.}  In particular, $u_x(\cdot,t)$ exists almost everywhere for each $t\ge -n$, and $|u_x|\le \bar Cu$.

\begin{proof}
From \eqref{new-2.3}, \eqref{2.6}, $\|a\|_{C^1}\le \|f\|_{C^2}$, and $1+\lambda-a_+\ge 1$ we have  
\begin{equation}\lb{new-4.3-1}
|\phi_\lambda '(x)| = \left| \frac {(J*\phi_\lambda)'(x) + a'(x)\phi_\lambda(x)} {1+\lambda-a(x)}  \right| \le \left[ C(1+\la -a_-)+||f||_{C^2} \right]\phi_\lambda(x)=:C_1\phi_\lambda(x).
\end{equation}
Since $v(x,t)=e^{\la t} \phi_\la(x)$, $|v_x|\le C_1v$. From concavity of $h$ we have $vh'(v)\le h(v)$.  Thus 
\[ |w_x|=h'(v)|v_x|\le C_1h'(v)v\le C_1h(v)=C_1w, \]
so $\eta(-n)\le C_1e^{C_1\de }$ (a bound which is independent of $n$).

The comparison principle for \eqref{1.1} shows $w\le u\le \til v:=\min\{v,1\}$ on $\mbr\times[-n,\infty)$. Concavity of $h$ then yields $\tilde v\le h(v)h(1)^{-1}=wh(1)^{-1}\le u h(1)^{-1}$.  Since from \eqref{new-4.3-1} we have $\tilde v(x,t)\le e^{2C_1\delta} \tilde v(y,t)$ for $|x-y|\le 2\de$ (with $\de$ from \eqref{J2}), it follows that
\begin{equation}\lb{4.3}
u(y,t)\le \til C u(x,t)
\end{equation}
for $|x-y|\le 2\de$ and $\til C:= e^{2C_1\del}h(1)^{-1}$.

Let now $u^s(x,t):=u(x+s,t)$, $q^s:=\tfrac 1s (u^s-u)$, and $z^s:= q^s/u$.  The lemma will follow if we show $|z^s(x,t)|\le \bar C$ for some $\bar C=\bar C(J,\la-a_-,||f||_{C^2},h)<\infty$ and all $x\in\bbR$, $t\ge-n$, and $0<|s|\le\del$ (recall that $h=h_g$ only depends on $g$).  We have
\begin{equation}
q^s_t-Hq^s =\frac{f(x+s,u^s)- f(x,u^s)}{s}+ \frac{f(x,u^s)-f(x,u)}{s}.  \lb{4.4}
\end{equation}
By \eqref{4.1} and \eqref{4.4}, 
\beq\lb{4.3z}
z^s_t= \al(x,t)+\be(x,t)z^s,
\eeq
with
\begin{eqnarray}
\al (x,t)&=&\frac{J*q^s}{u}+\frac{ f(x+s,u^s)-f(x,u^s)}{su},\lb{4.4a}\\
\be(x,t)&=&-\frac{J*u}{u}+\frac{f(x,u^s)- f(x,u)}{u^s-u}-\frac{f(x,u)}{u}.\lb{4.4b}
\end{eqnarray}

Recall $0<|s|\le\delta$. 
We have $J*q^s=\tfrac 1s(J^{-s}-J)*u$, 
so \eqref{4.3} implies $|J*q^s|\le 3\del ||J'||_{\infty} \tilde Cu$. Since also $f_x(\cdot,0)\equiv 0$, we obtain $|f(x+s,u^s)-f(x,u^s)|\le ||f||_{C^2}|s|u^s$, and \eqref{4.3} now gives 
\begin{equation}\lb{4.5}
|\al(x,t)|\le \tilde C \left( 3\del ||J'||_{\infty}+||f||_{C^2} \right)=: M.
\end{equation}
From \eqref{4.3} we obtain
\begin{equation}\lb{4.6}
- \frac{J*u}{u}  \le -\frac 1{\tilde C},
\end{equation}
as well as  
\begin{equation}\lb{4.7a}
\abs{\frac{f(x,u^s)- f(x,u)}{u^s-u}-\frac{f(x,u)}u}\le \frac 1{2\tilde C}
\end{equation} 
whenever $u\le \theta_0:=(2\til C^2 ||f||_{C^2})^{-1}$ (then also $u^s\le (2\til C ||f||_{C^2})^{-1}$).  Thus
\begin{equation}\lb{4.7}
\beta(x,t)\le -\frac 1{2\tilde C}
\end{equation}
when $u\le\tht_0$.

We now fix any $x\in\bbR$ and regard \eqref{4.3z} as an ODE in $t$.
If $t_x:=\inf\{t\ge -n:u(x,t)>\ta_0\}$, then \eqref{4.7} holds for all $t\in(-n,t_x)$.
Next define, with $\tht_1$ from \eqref{F3},
\[ T:=\frac 1{\la} \log \frac{\til C h^{-1}(\ta_1)}{\ta_0}.\]
From \eqref{4.2z}, $h'>0$, $u(x,t_x)\ge \ta_0$, and \eqref{4.3} we obtain for $|r|\le\del$ and $t\ge t_x+T$, 
\[ u^r(x,t)\ge h(v^r(x,t))\ge h(e^{\la T}v^r(x,t_x)) \ge h(e^{\la T}u^r(x,t_x)) \ge h(e^{\la T}\til C^{-1}\ta_0)=h(h^{-1}(\ta _1))= \ta _1. \]
So \eqref{F3} implies  
\[ \frac{f(x,u^s)- f(x,u)}{u^s-u}\le 0 \]
for $t\ge t_x+T$, and then \eqref{4.4b} and \eqref{4.6} show \eqref{4.7}
for $t\ge t_x+T$.
Finally, for $t\in[t_x,t_x+T)$, 
\begin{equation}\lb{4.9}
\be(x,t)\le ||f||_{C^1}.
\end{equation}

From \eqref{4.5} and \eqref{4.7} for $t\in(-n,t_x)$ we obtain
$z(x,t)\le \til \max\{\eta(-n),2\tilde CM\}$ for $t\le t_x$ (recall that $\eta(-n)$ is bounded uniformly in $n$), and then  \eqref{4.9} for $t\in[t_x,t_x+T)$ and \eqref{4.7} for $[t_x+T,\infty)$ yield
\[ |z^s(x,t)|\le \rb{\max\{\eta(-n),2\tilde CM\}+\frac{M}{||f||_{C^1}}}e^{||f||_{C^1}T}-\frac{ M}{||f||_{C^1}}=:\bar C \]
for all $t\ge -n$, and $x\in\bbR$ and $0<|s|\le\del$. This proves  \eqref{4.2}. 
\end{proof}

{\it Remark.}  The Harnack-type bound \eqref{4.3} played a crucial role in the above proof.  We note that without it, one can still prove that $\eta(t)$ is locally bounded if it is finite initially.  Indeed, the absolute value of the right-hand side of \eqref{4.4}  is bounded by $\|f\|_{C^1}(1+|q^s|)$, so the comparison principle shows (with initial time $t_0$)
\[
\|q^s(\cdot, t)\|_\infty \le [1+\eta(t_0)]e^{\|f\|_{C^1} (t-t_0)}-1
\]
for each $s\neq 0$.  Hence, $\eta(t)$ satisfies the same bound.
\smallskip

%

Let $u_n$ be the (unique) solution of \eqref{4.1}. The constant $\bar C$ from Lemma \ref{L.4.1} is a uniform-in-$n$ bound on $|(u_n)_x|$ because $0\le u_n\le 1$.  Since $Hu+f(x,u)$ is also uniformly bounded in $0\le u\le 1$, we find that $|(u_n)_t|\le 2+\|f\|_{C^1}$.
Since  
\[
\frac\partial{\partial t}[Hu+f(x,u)]= Hu_t+f_u(x,u)u_t
\]
by the dominated convergence theorem, we have $|(u_n)_{tt}|\le (2+\|f\|_{C^1})^2$.  Thus we see that $u_n$ and $(u_n)_t$ converge, along a  subsequence,  locally uniformly to $u_\lambda$ and $(u_\lambda)_t$ for some $u_\lambda:\bbR^2\to[0,1]$. Then obviously $u$ solves \eqref{1.1}, and \eqref{1.7} holds by \eqref{4.2z} for each $u_n$. 


From \eqref{1.7} we obtain \eqref{1.3}, so it remains show \eqref{1.4}.  If $L$ is from Lemma \ref{L.2.1} for $\phi_\la$ from \eqref{1.5}, then the lemma and \eqref{1.7} yield
\[
\sup_{t\in\bbR} L_{u,\eps}(t) \le L \lceil \log_2( \eps^{-1}h_g^{-1}(1-\eps)) \rceil,
\]
which gives \eqref{1.4}. So $u$ is a transition front and the proof of Theorem \ref{T.1.1} is finished. 



\section{Proof of Lemma \ref{L.3.1} (Estimate on the Third Derivative of $h_{g,\al}$)}

We will again drop the subscript $g,\al$ in 
$\rho_{g,\al}$, $h_{g,\al}$, and $U_{g,\al}$.  From \eqref{3.2}, \eqref{3.1}, and $c_\al=\al+\al^{-1}$ we have
\begin{equation}\lb{5.1}
\rho(x)=\al^{-3} e^{2x} \left[U'(\al^{-1}x)+\al g(U(\al^{-1}x))\right]= \al^{-3} e^{2x} \eta(\al^{-1}x),
\end{equation}
with $\eta=\eta_{g,\al}$ given by 
\begin{equation}\lb{5.2}
\eta:= U'+\al g(U).
\end{equation}
By differentiating we obtain
\begin{equation}\lb{5.3}
\rho '(x)=2\rho(x)+\al^{-4} e^{2x} \eta '(\al^{-1} x).
\end{equation}
Thus Lemma \ref{L.3.1} will follow if we  show $|\eta '|\le \al^{-1} \eta$.
Using \eqref{3.2} and $c_\al=\al+\al^{-1}$, we obtain
\begin{equation}\lb{5.4}
\eta '= -\al^{-1}\eta-\al U' (1-g'(U)).
\end{equation}
Since $U'<0\le 1-g'(U)$, the latter by \eqref{G2}, it suffices to prove $-\al U' (1-g'(U))\le 2\al^{-1} \eta$.  By \eqref{5.2}, this is equivalent to 
\beq\lb{5.5}
-U'\le \frac{2\al}{2+\al^2(1-g'(U))} g(U).
\eeq
Since $0\le 1-g'(U)\le 2$ by  \eqref{G2},  this (and hence Lemma \ref{L.3.1}) will be proved once we prove the following lemma.

\begin{lemma}\lb{L.5.1}
For $r_{\al}:(-\infty,1]\to \mbr$, given by 
\begin{equation}
r_\al (v):= \left\{
\begin{array}{ll}
\cfrac{\al}{1+\al^2(1-v)} & v\in[0,1], \\ 
\cfrac{\al}{1+\al^2} & v<0,
\end{array} \notag
\right.
\end{equation}
we have $-U'\le r_\al(g'(U)) g(U)$. 
\end{lemma}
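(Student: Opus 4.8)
The plan is to pass to the phase variable $q:=-U'>0$ and to the auxiliary function
\[
P:=-U''=g(U)-c_\alpha q ,
\]
the second equality being just \eqref{3.2} rearranged. The first step is to rewrite the claimed bound as two sign conditions on $P$. Since $1+\alpha^2(1-v)=\alpha(c_\alpha-\alpha v)$, the comparison function is $r_\alpha(v)=(c_\alpha-\alpha v)^{-1}$ for $v\in[0,1]$ and $r_\alpha(v)=c_\alpha^{-1}$ for $v<0$. Clearing denominators and using $c_\alpha q-g=-P$, the inequality $-U'\le r_\alpha(g'(U))g(U)$ is therefore \emph{equivalent} to
\[
P+\alpha\,g'(U)\,q\ge0\ \text{ on }\{g'(U)\in[0,1]\},\qquad P\ge0\ \text{ on }\{g'(U)<0\}.
\]
By \eqref{G2} one has $g'\le g'(0)=1$ everywhere, and $g'$ is decreasing with $g'(1)\ge-1$, so $s\mapsto g'(U(s))$ is non-decreasing, running from $g'(1)$ to $1$; let $s_c$ be the (unique, if any) point with $g'(U(s_c))=0$, so $\{g'(U)\le0\}=(-\infty,s_c]$ and $\{g'(U)\ge0\}=[s_c,\infty)$.

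The engine is the linear equation satisfied by $P$. Differentiating \eqref{3.2} in $s$ (legitimate since $g\in C^1$ forces $U\in C^3$) gives
\[
P'+c_\alpha P=-g'(U)\,q .
\]
On $(-\infty,s_c]$ the right-hand side is $\ge0$; since $q,P\to0$ as $s\to-\infty$ (standard from the exponential asymptotics of $U_{g,\alpha}$ at $U=1$), the homogeneous part drops out of $P(s)=\int_{-\infty}^s e^{-c_\alpha(s-\tau)}\bigl(-g'(U)q\bigr)\,d\tau$, whence $P\ge0$ there. This is exactly the second sign condition, and by continuity it also yields $P(s_c)\ge0$.

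For the first condition I would introduce $\tilde P:=P+\alpha g'(U)q$. A direct computation from $P'+c_\alpha P=-g'(U)q$, together with $\alpha c_\alpha=1+\alpha^2$, produces the linear equation
\[
\tilde P'+\bigl(c_\alpha-\alpha g'(U)\bigr)\tilde P=\alpha^2 g'(U)\bigl(1-g'(U)\bigr)q+\alpha q\,\frac{d}{ds}\bigl[g'(U(s))\bigr].
\]
On $[s_c,\infty)$ the coefficient $c_\alpha-\alpha g'(U)=r_\alpha(g'(U))^{-1}>0$, while the source is $\ge0$: the first term because $0\le g'\le1$, and the second because $s\mapsto g'(U(s))$ is non-decreasing. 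Since $\tilde P(s_c)=P(s_c)\ge0$ (or $\tilde P\to0$ at $-\infty$ in the degenerate case $g'\ge0$ throughout), integrating this equation \emph{forward} from $s_c$ gives $\tilde P\ge0$ on $[s_c,\infty)$, which is the first sign condition. Together the two conditions give the lemma.

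I expect the main difficulty to lie in getting the signs in the source of the $\tilde P$-equation exactly right: this is precisely where $c_\alpha=\alpha+\alpha^{-1}$ and the identity $\alpha c_\alpha=1+\alpha^2$ enter, and it is what singles out $r_\alpha$ as the sharp comparison function rather than a cruder constant. A secondary, technical obstacle is regularity: hypotheses (G) only give $g\in C^1$ with $g'$ monotone, so $\frac{d}{ds}g'(U(s))$ must be read as the (non-negative) distributional derivative of a monotone function, and one should justify the $\tilde P$-equation either by approximating $g$ by smooth functions $g_n$ with $g_n'$ decreasing and passing to the limit, or by treating $P,\tilde P$ as Lipschitz/BV functions; the sign inequalities are stable under such approximation. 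The decay of $q$ and $P$ at $\pm\infty$ needed to validate the integral representation is standard front asymptotics.
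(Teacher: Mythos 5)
Your proof is correct, and it takes a genuinely different route from the paper's. The paper establishes the stronger bound $-U'\le q(g'(U))\,g(U)$, where $q(v)=2/\bigl(c_\al+\sqrt{c_\al^2-4v}\bigr)$ for $v\in[0,1]$ is the smaller root of $vq^2-c_\al q+1=0$ and satisfies $q\le r_\al$, and it does so by a phase-plane barrier argument: for each $x$ it forms the region $D_x=\{(u,v):u\in(U(x),1),\,v\in(-sg(u),0)\}$ with $s=q(g'(U(x)))$, checks that the field $(V,-c_\al V-g(U))$ points into $D_x$ along the barrier $v=-sg(u)$, and concludes that the orbit $(U,U')$ remains in $D_x$, i.e.\ $-U'(x)\le s\,g(U(x))$. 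You instead reduce the claim to the two sign conditions $P\ge0$ on $\{g'(U)\le 0\}$ and $\til P:=P+\al g'(U)(-U')\ge0$ on $\{g'(U)\in[0,1]\}$, with $P=-U''$, and derive both from linear first-order equations with non-negative sources via integrating factors; I have verified the reduction (through $1+\al^2(1-v)=\al(c_\al-\al v)$), the $\til P$-equation (which indeed hinges on $\al c_\al=1+\al^2$), and the decay at $-\infty$ justifying the integral representation of $P$, and all are sound. As for what each approach buys: the paper's argument yields the sharper comparison function $q$ (equal to $r_\al$ only at $v=0,1$) and never differentiates $g'$, so it works verbatim under $g\in C^1$, though it does use strict decrease of $g'$ to get the strict inequality $\hat n\cdot\hat\xi<0$ at the barrier; your argument is more elementary and needs no strictness, but must make sense of $q\,\frac{d}{ds}[g'(U(s))]$, since (G) supplies no $g''$. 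Your proposed fix is the right one: $g'(U(\cdot))$ is continuous and non-decreasing, so its distributional derivative is a non-negative atomless measure, the Stieltjes product rule applies to products of continuous BV functions with $C^1$ functions, and the integrating-factor monotonicity argument survives in the sense of measures — this is cleaner than approximating $g$, which would additionally require stability of the fronts $U_{g_n,\al}$ as $g_n\to g$. Two minor points: bootstrapping \eqref{3.2} gives $U\in C^3$, so the $P$-equation itself is classical; and your degenerate case ($g'\ge0$ throughout) in fact never occurs, because (G1)--(G2) force $g'(1)<0$ — otherwise $g'\ge 0$ on $[0,1]$, so $g$ would be non-decreasing with $g(0)=g(1)=0$, hence $g\equiv0$, contradicting $g'(0)=1$ — though including it as a safety net is harmless.
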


{\it Remark.} This is an improvement of Lemma 3.1 in \cite{ZlaInhomog}, which shows that $-U'\le \al g(U)$
 (and thus $\eta>0$ and $h''<0$).
\smallskip


\begin{proof}[Proof of Lemma \ref{L.5.1}]
We will in fact prove the stronger estimate $-U'\le q(g'(U))g(U)$, where $q:(-\infty,1]\to \mbr$ is given by (recall that $c_\al=\al+\al^{-1}\ge 2$)
\begin{equation}
q (v)\equiv \left\{
\begin{array}{ll}
\cfrac{2}{c_\al+\sqrt{c_\al^2-4v}} & v\in[0,1], \\ 
\cfrac{1}{c_\al} & v<0.
\end{array} 
\right.
\end{equation}

It is easy to check that $q\le r_\al$ on $(-\infty,1]$. 
Also, $q>0$ is continuous and non-decreasing, and for $v\in[0,1]$  we have 
\begin{equation}\lb{5.7}
vq(v)^2-c_\al q(v)+1=0.
\end{equation}

Since $g'$ and $U$ are decreasing, $g'(U(x))$ is increasing in $x$ with limits $g'(1)<0$ and $g'(0)=1$ as $x\to\pm\infty$. Let $x_0\in\mbr$ be the unique number such that $g'(U(x_0))=0$, and let us prove 
\beq\lb{5.1z}
-U'(x)\le q(g'(U(x)))g(U(x))
\eeq
separately for $x\ge x_0$ and $x< x_0$. 

First fix any $x\ge x_0$.  Then $g'(U(x))\in[0, 1]$. \eqref{5.7} shows that $s:= q(g'(U(x)))$ satisfies 
\begin{equation}\lb{5.8}
g'(U(x))s^2-c_\al s+1=0.
\end{equation}
Define the region $D_x\subseteq \mbr^2$ by 
\[ D_x:= \left\{(u,v): u\in(U(x),1) \text{ and } v\in(-sg(u),0) \right\}. \]
\begin{figure}[H]
\centering
\includegraphics[scale=1]{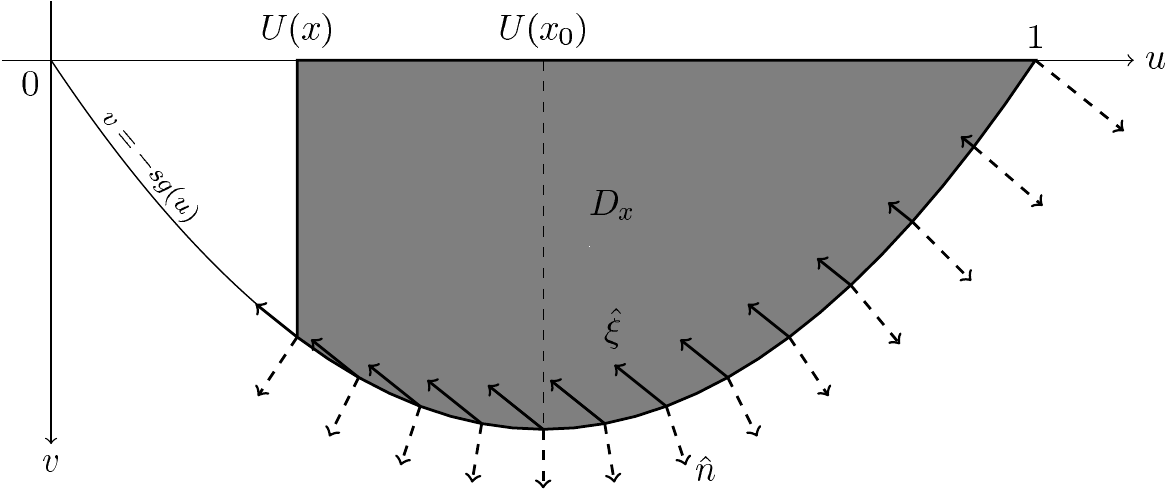}
\caption{The region $D_x$ in the case $x\ge x_0$ (so that $U(x)\le U(x_0)$).}
\end{figure}
Consider the curve $\{(U(y),V(y))\}$, with $V:=U'$. By \eqref{3.2}, $(U',V')=(V,-c_\al V-g(U))$. Notice that the vector $\hat \xi:=(v,-c_\al v-g(u))$ is pointing inside  $D_x$ when $u\in(U(x),1)$ and $v=-sg(u)$. Indeed, the vector 
\[ \hat n:=(-sg'(u),-1) \]
is an outer normal to $D_x$, and $v=-sg(u)$ gives
\[ \hat \xi =g(u)(-s, c_\al s-1). \]
Since $g>0$ and $g'$ is decreasing on $(0,1)$ , $u\in( U(x),1)$ and  \eqref{5.8} now yield
\[
\hat n\cdot\hat \xi=g(u)[g'(u)s^2-c_\al s+1]
< g(u)[g'(U(x))s^2-c_\al s+1]
=0.
\]

As a consequence, if $(U(y_0),V(y_0))\in D_x$ for some $y_0< x$, then $(U(y),V(y))\in D_x$ for all $y\in [y_0,x)$. Or equivalently,  if $(U(y_0), V(y_0))\notin  D_x$ for some $y_0<  x$, then $(U(y),V(y))\notin D_x$ for all $y\le y_0$. In this latter case we have 
\begin{equation}\lb{5.9}
V(y)<-s g(U(y))
\end{equation}
for all $y\le y_0$. From \eqref{3.2}, \eqref{5.9}, \eqref{5.8}, and $g'(U(x))>0$ it follows that 
\[
V'(y)=-c_\al V(y)-g(U(y))
> (c_\al s-1) g(U(y))
= g'(U(x))s^2 g(U(y))
>0
\]
for all $y\le y_0$.  But then $U'(y_0)=\int_{-\infty}^{y_0} V'(y)dy> 0$, a contradiction. 

Thus we must have $(U(y_0),V(y_0))\in D_x$ for all $y_0< x$, which yields $V(x)\ge -sg(U(x))$ by continuity. This is precisely \eqref{5.1z}, proving the lemma for $x>x_0$.

We actually proved $-U'(y_0)\le q(g'(U(x)))g(U(y_0))$ whenever $y_0\le x$ and $x\ge x_0$.  Taking $x:=x_0$ and renaming $y_0$ to $x\,(\le x_0)$, this becomes $-U'(x)\le q(0)g(U(x))$ for $x\le x_0$.  But this is again \eqref{5.1z} because for $x\le x_0$ we have $g'(U(x))\le 0$, so $q(g'(U(x)))=q(0)$.
\end{proof}

\section{Proof of Lemma 3.2 (Improved Harnack-Type Estimate for $\phi_\la$)} \lb{S6}

Let us drop the subscript $\la$ in $\phi_\la$.  Define
\begin{equation}
\ka(x):=H\left[\frac {|x|}2\right]=\frac 12 \int _{-\de}^\de J(y)(|x-y|-|x|)dy,
\end{equation}
which is continuous, even (because $J$ is), and supported in $[-\delta,\delta]$.  We also have 
\begin{equation}\lb{6.3}
0\le \ka\le \frac{\de^2}{2}J.
\end{equation}
To show this, observe that $\kappa=H[x_+]$, where $x_+:= \max\{x,0\}$. So for $x\in[-\de,0]$, 
\[
\ka(x)=\int_{-\de}^{x} J(y)(x-y)dy \in \left[ 0, \int_{-\de}^{x} J(x)(x-y)dy \right] \subseteq \left[ 0, \frac{\de^2}{2}J(x) \right]
\]
because $J$ is even and non-decreasing on $\mbr ^-$.  Since $\ka$ is also even and vanishes outside $[-\del,\del]$,  \eqref{6.3} follows.

We will first prove an estimate as in the lemma for the function
\begin{equation}\lb{6.1}
\psi:= ||\ka||_{L^1}^{-1} (\ka*\phi),
\end{equation}
and then show that $\phi\psi^{-1}$ is close to 1 when $\la-a_->0$ is small. 
The motivation for introducing the function $\psi$ is the fact that 
\beq\lb{6.1z}
(\ka*\varphi)''=H\varphi
\eeq
 for any continuous function $\varphi$, showing that
\begin{equation}\lb{6.4}
\psi''=||\ka||_{L^1}^{-1} H\phi=||\ka||_{L^1}^{-1}(\la-a(x))\phi
\end{equation}
(which is small when $\la-a_-$ is small).  

Identity \eqref{6.1z} should hold because for $m:= \frac 12 |x|$ we have $m''=\de_0$ (the delta function at 0) in the sense of distributions, so formally $\ka ''=H[m'']=H\de_0=J-\de_0$.  To prove \eqref{6.1z}, let $0\le \eta\le 1$ be a smooth bump function around $x$ with $\eta=1$ on $[x-2\de,x+2\de]$, and $\eta=0$ outside $[x-4\de,x+4\de]$. If $\tilde \varphi:=\varphi\eta$, then $\ka*\varphi=\ka*\tilde{\varphi}$ and $H\varphi=H\til\varphi$ on $[x-\de,x+\de]$.  We have 
\[
\ka*\tilde \varphi=(J*m-m)*\tilde{\varphi}=J*m*\tilde \varphi-m*\tilde\varphi
\]
 because $\tilde\varphi$ and $J$ are compactly supported.   Since
\[
 \int_\bbR\int_\bbR m(x-y)\til \varphi(y)\tht''(x) dydx = -\int_\bbR \til \varphi(y) \int_\bbR m'(x-y)\tht'(x) dxdy = \int_\bbR \til \varphi(y) \tht(y)dy
 \]
for any $\tht\in C_0^\infty(\bbR)$, we see that $(m*\til\varphi)'' = \til \varphi$ in the distributional sense.  Similarly, we have $(J*m*\til\varphi)'' = J*\til \varphi$, and both equalities hold pointwise because the right-hand sides are continuous functions.
Thus $(\ka*\tilde{\varphi})''=H\tilde \varphi$, so $(\ka*{\varphi})''(x)=(H\varphi)(x)$. This holds for any $x\in\bbR$, yielding \eqref{6.1z}.

The properties of $\phi$ and \eqref{6.3} show $\psi>0$ and $\lim_{x\to\infty} \psi(x)=0$.  Then \eqref{6.4} and $\la>a_+$ show  $\psi'<0$.  We also claim the following.

\begin{lemma}\lb{L.6.1}
There is $m_{s}=m_{s}(J)$ such that $\lim_{s \searrow 0}m_{s}=0$ and $|\psi '(x)|\le m_{\la-a_-}\psi(x)$.  In particular, $e^{-m_{\la-a_-}\de}\psi(x)\le \psi(x-y)\le e^{m_{\la-a_-}\de}\psi(x)$ whenever  $|y|\le\de$.
\end{lemma}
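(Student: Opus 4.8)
The plan is to show that $\psi$ nearly solves a linear inequality $\psi''\le K_s\psi$ with a coefficient $K_s=K_s(J)$ that is small when $s:=\la-a_-$ is small, and then to convert this into the logarithmic-derivative bound $|\psi'|\le\sqrt{K_s}\,\psi$ via a one-variable differential inequality. The reason for routing the argument this way, rather than through a tail estimate like \eqref{2.9}, is that \eqref{2.9} carries a factor $(\la-a_+)^{-1}$ and would thus produce a constant depending on $\la-a_+$; the differential-inequality argument instead turns the smallness of $\psi''/\psi$ (which is $O(s)$) directly into the smallness of $|\psi'|/\psi$, keeping the dependence on $J$ and $s$ alone. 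Throughout I use what is already established: $\psi\in C^2$ with $\psi>0$, $\psi'<0$, and $\psi''=\|\ka\|_{L^1}^{-1}(\la-a(x))\phi>0$, so that $\psi$ is convex, decreasing and tends to $0$ at $+\infty$; convexity together with boundedness below then also forces $\psi'\to 0$ at $+\infty$.

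First I would establish the pointwise comparison $\phi\le(1+s)e^{C\de}\psi$, where $C=C_J(1+s)$ is the constant from Lemma \ref{L.2.2}. Since $\phi$ solves \eqref{2.2} on all of $\bbR$, the remark after Lemma \ref{L.2.2} gives $\phi(x)\le(1+s)e^{C|y|}\phi(x-y)$, hence $\phi(x-y)\ge(1+s)^{-1}e^{-C\de}\phi(x)$ for $|y|\le\de$. Integrating this against the nonnegative kernel $\ka$, which is supported in $[-\de,\de]$, yields $(\ka*\phi)(x)\ge\|\ka\|_{L^1}(1+s)^{-1}e^{-C\de}\phi(x)$, that is, $\phi(x)\le(1+s)e^{C\de}\psi(x)$. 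Feeding this into $\psi''=\|\ka\|_{L^1}^{-1}(\la-a)\phi$ and using $\la-a(x)\le s$ gives
\[
0<\psi''(x)\le K_s\,\psi(x),\qquad K_s:=\|\ka\|_{L^1}^{-1}\,s\,(1+s)\,e^{C\de}.
\]
Because $C_J$, $\|\ka\|_{L^1}$ and $\de$ depend only on $J$, the coefficient $K_s$ depends only on $J$ and $s$, and $K_s\to 0$ as $s\searrow 0$.

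It then remains to deduce $-\psi'\le\sqrt{K_s}\,\psi$. I would set $G:=\psi'+\sqrt{K_s}\,\psi$ and compute, using $\psi''\le K_s\psi$,
\[
G'=\psi''+\sqrt{K_s}\,\psi'\le K_s\psi+\sqrt{K_s}\,\psi'=\sqrt{K_s}\,G,
\]
so that $e^{-\sqrt{K_s}\,x}G(x)$ is non-increasing. If $G(x_0)<0$ for some $x_0$, then $G(x)\le e^{\sqrt{K_s}(x-x_0)}G(x_0)\to-\infty$ as $x\to\infty$, contradicting $G(x)=\psi'(x)+\sqrt{K_s}\,\psi(x)\to 0$ (as both $\psi\to0$ and $\psi'\to0$ at $+\infty$). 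Hence $G\ge 0$, i.e. $-\psi'\le\sqrt{K_s}\,\psi$, and since $\psi'<0$ this is exactly $|\psi'|\le m_{s}\psi$ with $m_{s}:=\sqrt{K_s}$, which has the asserted dependence and limit. The ``in particular'' statement then follows by integrating $|(\log\psi)'|\le m_{s}$ over the interval between $x-y$ and $x$ for $|y|\le\de$. The only genuinely delicate point is to keep every constant free of $\la-a_+$, which is precisely what the differential inequality for $G$ accomplishes, in contrast to the cruder Harnack and tail bounds used earlier.
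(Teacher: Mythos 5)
Your proof is correct, and it follows essentially the same route as the paper: you derive the lower half of the comparison \eqref{6.5} from the Harnack-type remark after Lemma \ref{L.2.2}, plug it into \eqref{6.4} to get $\psi''\le K_s\psi$ with $K_s=\|\ka\|_{L^1}^{-1}s(1+s)e^{C\de}$ (exactly the paper's constant $\mu^2$), and then convert this into $-\psi'\le\sqrt{K_s}\,\psi$. The only difference is in that last elementary step: the paper works with $Q:=-\psi'/\psi$, which satisfies the Riccati inequality $Q'\ge Q^2-\mu^2$, and rules out $Q>\mu$ by a finite-time blow-up contradiction (needing only that $Q$ is finite on all of $\bbR$), whereas you use the integrating factor for $G:=\psi'+\sqrt{K_s}\,\psi$ together with the decay $\psi,\psi'\to 0$ at $+\infty$ — the extra fact $\psi'\to 0$, which the paper never needs, you justify correctly from convexity and positivity of $\psi$. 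Both mechanisms are equally elementary and yield the same constant $m_s=\sqrt{K_s}$, with the same dependence on $J$ and $s$ only.
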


\begin{proof}
With $C=C_J(1+\la-a_-)$ from Lemma \ref{L.2.2}, and from the remark following it,we have 
\beq\lb{6.5}
\frac{e^{-C\de}}{1+\la -a_-}\phi(x) \le \psi(x)
\le (1+\la -a_-)e^{C\de}\phi(x).
\eeq
Then \eqref{6.4} and \eqref{6.5} give 
\[ \psi ''(x)\le \frac{e^{C\de}(\la-a_-)(1+\la -a_-)}{||\ka||_{L^1}} \psi(x),\]
which then implies 
\[ -\psi '(x)\le \sqrt{\frac{e^{C\de}(\la-a_-)(1+\la -a_-)}{||\ka||_{L^1}}}\psi(x). \]
To see the latter, let $\mu$ be the constant on the right-hand side of the above inequality. Recall that $\psi ''\le \mu^2 \psi $ and  $\psi,\psi'' >0>\psi' $. Thus $Q:=-\psi '/\psi>0 $ satisfies $Q'\ge Q^2 -\mu^2$.  So if  $Q(x_0)> \mu$ for some $x_0\in \mbr $, then $Q'>0$ on $(x_0,\infty)$. Together with $Q'\ge Q^2-\mu^2$ this shows that $Q$ must blow up at some $x_1\in (x_0,\infty)$,  a contradiction. Thus $Q\in (0, \mu]$, as claimed. 

So we can let $m_{\la-a_-}$ be this $\mu$, and $\lim_{s\searrow 0}m_s =0$ is obvious.
\end{proof}


\begin{lemma}\lb{L.6.2}
There are $l_s =l_s (J) < L_s =L_s (J)$ such that  $\lim_{s\searrow 0} l_s =\lim_{s\searrow 0}L_s=1$ and  $l_{\la -a_-} \phi(x)\le \psi(x)\le L_{\la-a_-} \phi(x)$. 
\end{lemma}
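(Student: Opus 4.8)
The plan is to reduce the two-sided comparison to an oscillation bound for $\phi$ on the scale $\de$, and then transfer the small oscillation of $\psi$ furnished by Lemma \ref{L.6.1} back to $\phi$ using the eigenvalue equation. Concretely, subtracting $\phi(x)$ from the average defining $\psi$ in \eqref{6.1}, and using that $\ka\ge 0$ is even and supported in $[-\de,\de]$ with $\int\ka=\|\ka\|_{L^1}$, I obtain the identity
\[
\phi(x)-\psi(x)=\frac{1}{\|\ka\|_{L^1}}\int_{-\de}^{\de}\ka(y)\,[\phi(x)-\phi(x-y)]\,dy,
\]
which I would symmetrize (since $\ka$ is even) into an integral of the second difference $2\phi(x)-\phi(x-y)-\phi(x+y)$. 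It follows immediately that if $|\phi(x-y)-\phi(x)|\le \ep_s\,\phi(x)$ for all $|y|\le\de$, with $\ep_s\to 0$ as $s:=\la-a_-\searrow 0$, then $|\phi-\psi|\le\ep_s\phi$, which is exactly the claim with $l_s=1-\ep_s$ and $L_s=1+\ep_s$.

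Thus everything comes down to the oscillation estimate $\sup_{|y|\le\de}|\phi(x-y)-\phi(x)|\le\ep_s\phi(x)$. The mechanism I would exploit is that the eigenvalue equation in the form \eqref{2.6}, $(J*\phi)(x)=(1+\la-a(x))\phi(x)$, forces $\phi$ to be close to its own $J$-average: the multiplier satisfies $1+\la-a(x)\in[1,1+s]$, since $a\le a_+\le\la$ gives $1+\la-a\ge 1$ and $a\ge a_-$ gives $1+\la-a\le 1+s$. The decisive point is that the \emph{total} oscillation of $a$ over any interval is at most $a_+-a_-\le s$, regardless of $\|a'\|_\infty$, so every term that the $x$-dependence of $a$ contributes is genuinely $O(s)$. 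Coupling this with \eqref{6.4}, i.e. $\|\ka\|_{L^1}\psi''=(\la-a)\phi$, and with the slow variation of $\psi$ from Lemma \ref{L.6.1} ($|\psi'|\le m_s\psi$, $m_s\to 0$), expresses $\phi=\|\ka\|_{L^1}\psi''/(\la-a)$ in terms of a mollified quantity that I already control, and lets me bound the oscillation of $\phi$ by that of $\psi$ up to $O(s)$ corrections.

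To make this precise while avoiding the obvious circularity—the oscillation of $\phi$ seemingly requires itself—I would run a bootstrap on the comparison constants. Set $N:=\sup_x \phi/\psi$ and $n:=\inf_x \phi/\psi$, both finite and positive by the crude Harnack bound \eqref{6.5}. Inserting \eqref{6.5} and Lemma \ref{L.6.1} into the symmetrized identity produces an improved admissible pair $(n,N)$, strictly closer to $(1,1)$ with a gain governed by $m_s$ and by the $O(s)$ control of the $a$-terms; iterating drives $N/n\to 1$ and $N,n\to 1$ as $s\searrow 0$. Since the constant $C$ in \eqref{6.5}, the quantity $m_s$ from Lemma \ref{L.6.1}, and $\|\ka\|_{L^1}$ all depend only on $J$ and on $s=\la-a_-$, the resulting $l_s,L_s$ depend only on those data, as required.

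The step I expect to be the main obstacle is exactly this transfer of oscillation from the mollification $\psi$ back to $\phi$. A priori $\phi$ is controlled only by the fixed Harnack constant $e^{C\de}$ from Lemma \ref{L.2.2}, which does \emph{not} tend to $1$; the entire gain must therefore be extracted from the eigenvalue equation and the slow variation of $\psi$, and the delicate point is ensuring the bootstrap does not stall at some constant strictly larger than $1$. As a cleaner fallback I would keep the integrated representation $\psi(x)=\|\ka\|_{L^1}^{-1}\int_x^\infty (t-x)(\la-a(t))\phi(t)\,dt$, obtained by integrating \eqref{6.4} twice (using $\psi,\psi'\to 0$ at $+\infty$, which hold by \eqref{6.5} and the exponential decay of $\phi$); combined with matching two-sided exponential decay rates $\asymp\sqrt s$ for $\phi$ it yields $\psi/\phi\to 1$ directly, although producing those matched rates is itself essentially the same oscillation estimate.
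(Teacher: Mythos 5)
Your opening reduction is correct: since $\|\ka\|_{L^1}^{-1}\int_{-\de}^{\de}\ka(y)\,dy=1$, one indeed has
\[
\phi(x)-\psi(x)=\|\ka\|_{L^1}^{-1}\int_{-\de}^{\de}\ka(y)\,[\phi(x)-\phi(x-y)]\,dy,
\]
so the lemma would follow from an oscillation bound $\sup_{|y|\le\de}|\phi(x-y)-\phi(x)|\le\eps_s\phi(x)$ with $\eps_s\to0$. But that bound is precisely Lemma \ref{L.3.2}, the statement this lemma exists to prove, and the device you offer to break the circularity fails. Write $M_s:=e^{-m_s\de}$ with $m_s$ from Lemma \ref{L.6.1}, and suppose you know only the global bounds $n\psi\le\phi\le N\psi$. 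Then Lemma \ref{L.6.1} gives, for $|y|\le\de$,
\[
\frac nN\,M_s\,\phi(x)\le\phi(x-y)\le\frac Nn\,M_s^{-1}\,\phi(x),
\]
and inserting this into the identity above returns the new admissible pair $n'=\tfrac nN M_s$, $N'=\tfrac Nn M_s^{-1}$, whose ratio is $N'/n'=(N/n)^2M_s^{-2}\ge N/n$. The iteration never contracts --- it squares the defect --- so the bootstrap stalls (in fact diverges) exactly in the way you yourself flagged, and nothing in your sketch repairs this; your fallback via the integrated representation is, as you concede, the same oscillation estimate again. The claim that one step ``produces an improved admissible pair $(n,N)$, strictly closer to $(1,1)$'' is unjustified and, by the computation above, generically false.

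The idea you are missing is to localize at a near-extremal point of the ratio rather than argue globally. With $\nu:=\sup_x\psi(x)/\phi(x)$ (finite by \eqref{6.5}), pick $x_0$ with $\psi(x_0)/\phi(x_0)\ge(1-\eps)\nu$. Then for $|y|\le\de$,
\[
\frac{\phi(x_0-y)}{\phi(x_0)}=\frac{\phi(x_0-y)}{\psi(x_0-y)}\cdot\frac{\psi(x_0-y)}{\psi(x_0)}\cdot\frac{\psi(x_0)}{\phi(x_0)}\ \ge\ \frac1\nu\cdot M_{\la-a_-}\cdot(1-\eps)\nu\ =\ (1-\eps)M_{\la-a_-},
\]
where the unknown constant $\nu$ cancels: at $x_0$ you get an oscillation bound close to $1$ with no iteration at all. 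Note this bound is only one-sided (the reverse inequality would carry the non-cancelling factor $\nu/\mu$), which is why the paper additionally uses the equation, in the form
\[
\int_{-\de}^{\de}J(y)[\phi(x_0-y)-\phi(x_0)]_+\,dy=(\la-a(x_0))\phi(x_0)+\int_{-\de}^{\de}J(y)[\phi(x_0-y)-\phi(x_0)]_-\,dy,
\]
together with $0\le\ka\le\tfrac{\de^2}2J$ from \eqref{6.3}, to control the positive part of the oscillation appearing in your identity. This closes the inequality $(1-\eps)\nu\le1+\tfrac{\de^2}{2\|\ka\|_{L^1}}\left[\la-a_-+1-(1-\eps)M_{\la-a_-}\right]$, whence $\nu\le L_{\la-a_-}$ after $\eps\to0$; the lower bound $\mu:=\inf_x\psi/\phi\ge l_{\la-a_-}$ follows symmetrically at a near-minimizer, using $\la>a_+$. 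Both the extremal-point cancellation and this one-sided use of the equation are absent from your proposal, and without them the argument does not close.
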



\begin{proof}
Let $M_{s}:=e^{-m_{s}\de}$, with $m_{s} $ from Lemma \ref{L.6.1}, and define
\begin{equation}
\mu := \inf_{x\in\mbr}\frac{\psi(x)}{\phi(x)},\qquad 
\nu := \sup_{x\in\mbr} \frac{\psi(x)}{\phi(x)}.
\end{equation}
We have  $0< \mu\le \nu <\infty$ by \eqref{6.5}.
Given any $\eps>0$, let $x_0$ be such that 
\begin{equation} \lb{6.8z}
(1-\eps)\nu \le \frac{\psi(x_0)}{\phi(x_0)}\le \nu .
\end{equation} 

If $|y|\le \de $, then by Lemma \ref{L.6.1},
\begin{equation}\label{6.8}
\frac{\phi(x_0-y)}{\phi(x_0)}=\frac{\phi(x_0-y)\psi(x_0-y)\psi(x_0)}{\psi(x_0-y)\psi(x_0)\phi(x_0)}\ge (1-\eps ) M_{\la-a_-}  .
\end{equation}
Thus we find that 
\begin{eqnarray*}
\int_{-\de}^{\de}J(y)[\phi(x_0-y)-\phi(x_0)]_+dy&=&H\phi(x_0)+\int_{-\de}^{\de } J(y)[\phi(x_0-y)-\phi(x_0)]_-dy\\
&\le&(\la-a_-)\phi(x_0)+[1- (1-\eps) M_{\la-a_-} ]\phi(x_0).
\end{eqnarray*}
So by the definition of $\psi$ and \eqref{6.3},
\begin{eqnarray*}
\psi(x_0)&=&\phi(x_0)+\frac{1}{||\ka||_{L^1}} \int_{-\de}^{\de }\ka(y)[\phi(x_0-y)-\phi(x_0)]dy\\
&\le&\phi(x_0)+ \frac{\de^2}{2||\ka||_{L^1}} \int_{-\de}^{\de }J(y)[\phi(x_0-y)-\phi(x_0)]_+dy\\
&\le& \phi(x_0)+\frac{\de^2}{2||\ka||_{L^1}} [\la-a_-+1- (1-\eps) M_{\la-a_-} ] \phi(x_0).
\end{eqnarray*}
Hence \eqref{6.8z} shows 
\[ (1-\eps)\nu  \le 1+ \frac{\de^2}{2||\ka||_{L^1}} [\la-a_-+1- (1-\eps)M_{\la-a_-} ].\]
Taking $\eps\to 0$ yields 
\[ \nu \le 1+ \frac{\de^2}{2||\ka||_{L^1}} [\la-a_-+1-M_{\la-a_-}]=:L_{\la -a_-} = L_{\la -a_-}(J),\]
and $\lim_{s\searrow 0}L_s=1$ follows from the same for $M_s$, which is due to Lemma \ref{L.6.1}.

A similar argument, using $\psi(x_0)\phi(x_0)^{-1}\le (1+\eps)\mu$ to show
$\phi(x_0-y)\phi(x_0)^{-1} \le (1+\eps ) M_{\la-a_-}^{-1}$ 
for  $|y|\le \de $ 
and then
\begin{eqnarray*}
-\int_{-\de}^{\de}J(y)[\phi(x_0-y)-\phi(x_0)]_-dy&=&H\phi(x_0)-\int_{-\de}^{\de } J(y)[\phi(x_0-y)-\phi(x_0)]_+dy\\
&\ge&(\la-a_+)\phi(x_0)-[(1+\eps) M_{\la-a_-}^{-1} -1 ]\phi(x_0),
\end{eqnarray*}
shows (recall that $\la>a_+$)
\[ \mu\ge 1-\frac{\de^2}{2||\ka||_{L^1}} [M_{\la-a_-}^{-1}-1] =:l_{\la -a_-} = l_{\la -a_-}(J). \]
Again, $\lim_{s\searrow 0}l_s=1$ is immediate. 
\end{proof}

To prove  Lemma \ref{L.3.2}, it suffices to show $\phi (x-y)\le C_{\la-a_-} \phi(x)$ whenever $|y|\le \de $, where $C_s=C_s(J)$ and $\lim _{s \searrow 0}C_s =1$ (then $\gamma_s:=C_s-1$). By Lemmas \ref{L.6.2} and \ref{L.6.1}, 
\[ \phi(x-y)\le l^{-1}_{\la-a_-} \psi (x-y)\le l^{-1}_{\la-a_- }e^{m_{\la-a_-}\de} \psi(x)\le l^{-1}_{\la -a_-}e^{m_{\la-a_-}\de}L_{\la -a_-}\phi(x). \]
Hence we set $C _s := l_{s} ^{-1}L_{s} e^{m_{s}\de}$, and the proof is finished.


\end{document}